\DeclareMathOperator{\rank}{{\rm rank}}
\newtheorem{maintheorem}{Theorem} 
\newtheorem{theorem}{Theorem}
\newtheorem{lemma}[theorem]{Lemma}
\newtheorem{coro}[theorem]{Corollary}
\newtheorem{prop}[theorem]{Proposition}
\newtheorem*{assumptions*}{Assumptions}
\newtheorem*{rem*}{Remark}
\theoremstyle{remark}
\newtheorem*{remark*}{Remark}
\theoremstyle{definition}
\newtheorem{definition}{Definition}
\newcommand{\bfa}{{\mathbf a}}
\newcommand{\bfb}{{\mathbf b}}
\newcommand{\bfc}{{\mathbf c}}
\newcommand{\bfd}{{\mathbf d}}
\newcommand{\bfp}{{\mathbf p}}
\newcommand{\Bb}{{\mathcal B}}
\newcommand{\Jj}{{\mathcal J}}
\newcommand{\Mm}{{\mathcal M}}
\newcommand{\Oo}{{\mathcal O}}
\newcommand{\CC}{{\mathbb C}}
\newcommand{\MM}{{\mathbb M}}
\newcommand{\NN}{{\mathbb N}}
\newcommand{\RR}{{\mathbb R}}
\newcommand{\Sb}{{\mathbb S}}
\newcommand{\VV}{{\mathbb V}}
\newcommand{\WW}{{\mathbb W}}
\newcommand{\XX}{{\mathbb X}}
\newcommand{\ZZ}{{\mathbb Z}}
\newcommand{\one}{{\bf 1}}
\newcommand{\nul}{{\bf 0}}
\newcommand{\qtx}[1]{\quad\text{#1}\quad}
\newcommand{\SL}{{\rm SL}}
\newcommand{\GL}{{\rm GL}}
\newcommand{\Un}{{\rm U}}
\newcommand{\pmat}[1]{\begin{pmatrix} #1  \end{pmatrix}}
\newcommand{\smat}[1]{\left( \begin{smallmatrix} #1  \end{smallmatrix} \right)}
\newcommand{\diag}{{\rm diag}}
\DeclareMathOperator{\ran}{{\rm ran}}
\DeclareMathOperator{\Tr}{{\rm Tr}}
\newcommand{\be}[1]{\begin{equation} \label{#1} }
\newcommand{\ee}{\end{equation}}
\newcommand{\beq}{\begin{equation}}
\numberwithin{theorem}{section}
\numberwithin{equation}{section}
\title[Analytic Cocycles with negative infinite Lyapunov exponents]{Singular Analytic Linear Cocycles with negative infinite Lyapunov exponents}
\author{Christian Sadel}
\address[Sadel]{Institute of Science and Technology, 3400 Klosterneuburg, \hbox{Austria} and Facultad de Matem\'aticas, Pontificia Universidad Cat\'olica, Santiago de Chile} 
\email{chsadel@mat.uc.cl}
\author{Disheng Xu}
\address[Xu]{Universit\'e Paris Diderot, Sorbonne Paris Cit\'e, Institut de Math\'ematiques de Jussieu-Paris Rive Gauche, UMR 7586, CNRS, Sorbonne Universit\'es, UPMC Universit\'e Paris 06, F-75013, Paris, France}
\email{disheng.xu@imj-prg.fr}
\subjclass[2010]{Primary 37C55,  Secondary 34C20, 37A20, 37F99, 37G05}  
\keywords{complex analytic cocycles, quasi-periodic cocycles, nilpotent cocycles, normal forms}
\begin{document}

\begin{abstract}
We show that linear analytic cocycles where all Lyapunov exponents are negative infinite are nilpotent. For such one-frequency cocycles we show
that they can be analytically conjugated to an upper triangular cocycle or a Jordan normal form. As a consequence, an arbitrarily small analytic perturbation leads to distinct Lyapunov exponents.

Moreover, in the one-frequency case where the $k$-th Lyapunov exponent is finite and the $k+1$st negative infinite, we
obtain a simple criterion for domination in which case there is a splitting into a nilpotent part and an invertible part.
\end{abstract}

\maketitle

\tableofcontents

\section{Introduction}

Let $\XX$ be a compact space, $\mu$ a probability measure on the Borel $\sigma$-algebra of $\XX$ and $f: \XX \to \XX$ a measure preserving transformation, $\mu(f^{-1}(\Bb))=\mu(\Bb)$ for all Borel sets $\Bb\subset \XX$.
Iterations of the map $f$ define a dynamical system on $\XX$, the so called base dynamics.
By $\CC^{d\times d}$ we denote the set of $d\times d$ matrices with complex entries. 
For a measurable map $A: \XX \to \CC^{d\times d}$ one obtains the linear cocycle $(f,A)$ denoting the map
$$
(f,A)\,:\,\XX\times \CC^d\,\to\,\XX\times \CC^d\;,\, (x,v)\,\mapsto\,(f(x),A(x)v)\;.
$$

Some examples of linear cocycles are the derivative cocycle $(f, Df)$ of a $C^1-$ map of torus, the random products of matrices, Schr\"odinger cocycles, etc. 

In general we want to consider analytic cocycles:

\begin{definition}\label{def:1}
We call $(f,A)$ an analytic cocycle over a compact, connected measure space $(\XX,\mu)$ if the following three assumptions hold:
\begin{itemize}
 \item[{\rm (A1)}] $\XX$ is a compact, connected, real analytic manifold
 \item[{\rm (A2)}] For any analytic chart (bi-analytic map) $\varphi:\Oo \subset \XX \to U \subset \RR^\ell$ the push-forward measure $\mu\circ\varphi^{-1}$ 
 on $U$ has a continuous density with respect to the Lebesgue measure on $U$. 
 \item[{\rm (A3)}] $f$ and $A$ are (real) analytic, i.e. $f \in C^\omega(\XX,\XX)$ and $A\in C^\omega(\XX,\CC^{d\times d})$.
\end{itemize}
\end{definition}

Note, if $\XX$ would not be connected then using compactness one finds that a certain iterative power of $f$ would leave the connected components invariant and one could consider the corresponding powers of $(f,A)$ inducing cocycles
on these components.

The prime example we are thinking about are cocycles over the rotation on a torus, i.e. $\XX=\RR^\ell/\ZZ^\ell$, $\mu$ is the canonical Haar measure (or Lebesgue measure), $f(x)=x+\alpha$ with $\alpha\in \RR^\ell/\ZZ^\ell$ and $A\in C^\omega(\RR^\ell/\ZZ^\ell,\CC^{d\times d})$.
Then we may denote the cocycle $(f,A)$ also by $(\alpha,A)$ and call it an $\ell$-frequency cocycle, because the base dynamics is determined by the $\ell$-frequency vector $\alpha$. 

If $\alpha$ is a rational vector, then $A(f^n(x)), n\in\NN$ is a periodic sequence, for $\alpha$ irrational one calls it a quasi-periodic sequence and $(\alpha,A)$ is a quasi-periodic cocycle. 
Such one-frequency quasi-periodic $\SL(2,\RR)$ cocycles have been intensively studied in the past because they are very important for the theory of discrete quasi-periodic one-dimensional Schrödinger operators, see \cite{Av2} and references therein.

For analytic cocycles one often uses some inductive limit topology considering holomorphic extensions\footnote{taking a finite analytic atlas one can technically complexify the arguments $x\in \XX$ in the charts and extend $A(x)$ to a multi-holomorphic function by Taylor expansions} of $\XX$ and $A\in C^\omega(\XX,\CC^{d\times d})$, in the one-frequency case see e.g. \cite{AJS}.

The main object of interest of linear cocycles is the asymptotic behavior of the products of $A$ along the orbits of $f$, especially the \textit{Lyapunov exponents}.
Iterating a linear cocycle leads to $(f,A)^n=(f^n,A_n)$ or $(\alpha,A)^n=(n\alpha,A_n)$, where
\begin{equation}
A_n(x)\,=\,A(f^{n-1}(x))\,A(f^{n-2}(x))\,\cdots\; A(f(x))\,A(x)\;. 
\end{equation}

Let $\sigma_k(A)$ denote the $k$-th singular value of a matrix $A$, i.e. $\sigma_k(A)\geq 0$ and the squares, $\sigma_1^2\geq \sigma_2^2\geq \ldots\geq \sigma_d^2$
are the eigenvalues of $A^*A$. Then, the $k$-th Lyapunov exponent is defined by
\begin{equation}
 L_k(f,A)\,=\,\lim_{n\to\infty}\,\frac1n\, \int_{\XX} \ln(\sigma_k(A_n(x)))\;d\mu(x)\;.
\end{equation}
With $\Lambda^k A$ we denote the linear operator on the anti-symmetric tensor product $\Lambda^k \CC^d$ defined by $\Lambda^k A (v_1\wedge \ldots \wedge v_k)= (Av_1\wedge\ldots \wedge Av_k)$.
Then it is well known that $\prod_{j=1}^k \sigma_j(A)=\|\Lambda^k A\|=\sigma_1(\Lambda^k A)$ giving
\begin{equation}
 \sum_{j=1}^k L_j(f,A)=L_1(f,\Lambda^k A)=\int_\XX \ln \|\Lambda^k A(x)\|\,d\mu(x)\;.
\end{equation}
If we have an $\ell$-frequency cocycle with $f(x)=x+\alpha$, then we may also write $L_k(\alpha,A)$.

\vspace{.2cm}

Let $\int \ln_+\|A(x)\|d\mu(x) <\infty$ where $\ln_+$ is the positive part of the logarithm, then Kingman's Subadditive Ergodic Theorem shows that the Lyapunov exponents exist\footnote{here, $L_k=-\infty$ is possible} with $L_k\in[-\infty,\infty)$.
If $A(x)$ is continuous and always invertible, then all Lyapunov exponents are finite, i.e. bigger than $-\infty$. But if $A(x)$ can have a kernel, then one might end up with some $-\infty$ Lyapunov exponents.
We want to classify these situations for analytic cocycles.

\vspace{.2cm}

Understanding the structure of cocycles is an important branch in the theory of dynamical systems. 
An important question is how frequent cocycles with simple Lyapunov spectrum occur (cf. \cite{GM},\cite{BV},\cite{AV},\cite{GR},\cite{V},\cite{FK},etc.). 
The Lyapunov spectrum is called simple if all Lyapunov exponents are different.
Typically one would expect this to be true on a dense set of cocycles.
This question, however, gets trickier the higher the considered regularity class.
On the other hand, in low regularity ($C^0$), failure of non-uniform hyperbolicity is a fairly robust phenomenon in the topological sense \cite{B}.

For $\SL(2,\RR)$-cocycles Avila showed that the set of cocycles with distinct (or positive) Lyapunov exponents is dense in all usual regularity classes \cite{Av}. 
Distinctness of the largest and smallest Lyapunov exponent on a dense set of general symplectic or pseudo-unitary cocycles of $d\times d$ matrices (in all regularity classes) was shown in \cite{Xu}. 
It relies on Kotani theory and local averaging formulas combining ideas from \cite{Av,AK,KS,Sa}, but a certain {\it real} Lie-group structure is always very important.
For complex analytic $\SL(2,\CC)$ or $\CC^{d\times d}$ cocycles the question is open. 
An approach to distinct Lyapunov exponents has been worked out by Duarte and Klein \cite{DK, DK2} which is based on generalizations of the Avalanche principle and large deviation estimates. These tools had been used a lot for $\SL(2,\RR)$ cocycles (\cite{BJ,Bou,GS}).

Once there is some gap in the Lyapunov spectrum another important concept is that of domination (a generalisation of the notion of uniform hyperbolicity, a precise definition is given below). In \cite{AJS} it was shown that within the set of complex, analytic one-frequency cocycles with a gap in the Lyapunov spectrum, the set of 
dominated cocycles is dense. However, for complex analytic cocycles it is not clear whether the set of cocycles where all Lyapunov exponents are equal has a non-empty interior.

We propose to attack this and further question for complex cocycles by looking for conjugated 'normal forms' similar
as Jordan normal forms or Hilbert-Schmidt decompositions for matrices. 
One should try to classify cocycles where all Lyapunov exponents are equal. 
In this work we consider cocycles where all Lyapunov exponents are negative infinite.
Within the measurable, ergodic category, the Oseledets filtration gives some block upper-triangular normal form, cf. \cite{O,R} which can be refined by looking at so called maximal invariant flags \cite{ACO}.
For invertible cocycles ($f$ and $A$ invertible) one has an Oseledets splitting and a block diagonal normal form. Each block corresponds to a distinct Lyapunov exponent.

Before getting to the normal forms mathematically, we need a proper equivalence relation. Two cocycles $(f,A)$ and $(f,B)$ with the same base dynamics are dynamically conjugated, if
$$
B(x)\,=\,M^{-1}(f(x))\,A(x)\,M(x)
$$
where $M: \XX \to \GL(d)$ is a measurable map into the general linear group. Then, $(f,B)=({\rm id},M)^{-1} (f,A) ({\rm id},M)$ and the cocycles are dynamically equivalent.
However, if $M$ is only measurable and only almost surely defined, 
then one looses regularity features like e.g. analyticity of the cocycle and other certain fine distinctions such as non-uniform and uniform hyperbolicity or the notion of domination.
Therefore, in terms of normal forms we are only interested at dynamical conjugation within the regularity class. Especially in this case we consider {\it analytic} cocycles and we also want $M(x)$ (and hence $B(x)$) to depend analytically on $x$.

\vspace{.2cm}

One way to create cocycles where all Lyapunov exponents are $-\infty$ is by constructing cocycles such that after finitely many steps one arrives at the zero cocycle.
We call such cocycles nilpotent:
\begin{definition}
 A linear cocycle $(f,A)$ is called {\it nilpotent} if for finite $n$ we have $A_n(x)=\nul$ $\mu$-almost surely.
 The minimal such natural number is called the {\it nilpotency degree} $p$.
\end{definition}
Clearly, for nilpotent cocycles all Lyapunov exponents are negative infinite. Our main result is that for analytic cocycles this is an equivalence.
Let us note that in the $C^\infty$ regularity class it is wrong that $L_1=-\infty$ implies nilpotency; even for $1\times 1$ cocycles!
To show this let $A(x)=e^{-1/x^2-1/(1-x)^2}$ for $x\in(0,1)$, $A(0)=A(1)=0$ and continue periodically.
Then, $A\in C^\infty(\RR/\ZZ,\CC^{1\times 1})$, and $(\alpha,A)$ is clearly not nilpotent but 
$$
L_1(\alpha,A) =\,-\,\int_0^1 \frac{1}{x^2} + \frac{1}{(1-x)^2}\; dx\;=\;-\infty\;.
$$

Nilpotency can be achieved by taking upper triangular matrices with zeroes along (and below) the diagonal. 
Our second main result is that in the analytic one-frequency case these are all possibilities up to analytic unitary dynamical conjugation.
Particularly, an arbitrarily small analytic perturbation leads to simplicity of Lyapunov exponents.

If we have only some negative infinite Lyapunov exponents, but $L_1(\alpha,A)>-\infty$ we can split of some nilpotent analytic invariant subspace corresponding to the negative infinite Lyapunov exponents.
In this case we also get some simple criterion for a dominated splitting.

\vspace{.3cm}

In the next section we state the precise theorems and give several remarks.
In Section~\ref{sec:rank1} we treat first the case when the rank of $A(x)$ is at most one and show that $L_1=-\infty$ implies nilpotency. 
Then, based on this result we can treat the case for general rank of $A(x)$ in Section~\ref{sec:rank-r}.
Section~\ref{sec:non-nilpotent} finally considers one-frequency cocycles where only some Lyapunov exponents are negative infinite.
In the Appendix we give some important facts which are used multiple times.

\vspace{.3cm}

\noindent {\bf Acknowledgement:}
This research has been funded by the People Programme (Marie Curie Actions) of the European 
Union's Seventh Framework Programme \hbox{FP7/2007-2013} under REA grant agreement number 291734.
D.X. would like to thank his thesis advisor Artur Avila for the supervision and support, and this research was partially conducted during the period when D.X. visited the Institute of Science and Technology, Austria.

\section{Results}

Having only negative infinite Lyapunov exponents implies nilpotency in the analytic category: 

\begin{maintheorem}\label{main-1}
 Let $(f,A)$ be an analytic cocycle over a compact, connected measure space $(\XX,\mu)$ in the sense of Definition~\ref{def:1} and assume that $L_1(f,A)=-\infty$. 
 Then, $(f,A)$ is nilpotent, more precisely, $A_{r+1}(x)=\nul$ for all $x$, where $r=\max_x \rank A(x) \leq d-1$ is the maximal rank.
\end{maintheorem}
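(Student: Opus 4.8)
\emph{Overall strategy.} The plan is to first reduce the statement to the behaviour of the Hilbert--Schmidt norms $\|A_n(x)\|_{\mathrm{HS}}$, which are real-analytic in $x$ (since $\|M\|_{\mathrm{HS}}^2=\Tr(M^*M)=\sum_{i,j}|M_{ij}|^2$ and $\|M\|\le\|M\|_{\mathrm{HS}}\le\sqrt d\,\|M\|$, so $L_1(f,A)=\lim_n\tfrac1{2n}\int_\XX\ln\|A_n(x)\|_{\mathrm{HS}}^2\,d\mu(x)$), and then to run an induction on $r$. Throughout I use the elementary fact that for a real-analytic $h\colon\XX\to[0,\infty)$ with $h\not\equiv0$ the zero set is a $\mu$-null real-analytic subvariety (by (A2)) and $\int_\XX\ln h\,d\mu$ is finite. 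A first application, to $h=|\det A|^2$ together with $\int_\XX\ln|\det A|\,d\mu=L_1(f,\Lambda^dA)=\sum_{j=1}^dL_j(f,A)=-\infty$ (all $L_j=-\infty$ because $L_1=-\infty$), forces $\det A\equiv\nul$, hence $r=\max_x\rank A(x)\le d-1$. The base case $r\le1$ is the technical heart; the inductive step bootstraps it via exterior powers.

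\emph{The rank $\le1$ case.} Assume $\rank A(x)\le1$ for all $x$ and $A\not\equiv\nul$ (otherwise $A_1\equiv\nul$). Off the $\mu$-null analytic set $\{A=\nul\}$ one can locally write $A(x)=u(x)v(x)^*$ with $\|v(x)\|=1$; the function $\psi(x):=\|A(x)\|_{\mathrm{HS}}^2=\|u(x)\|^2$ is globally real-analytic and $\not\equiv0$. Since each factor has rank $\le1$, $A_n(x)=u(f^{n-1}x)\,c_n(x)\,v(x)^*$ with $c_n(x)=\prod_{j=0}^{n-2}v(f^{j+1}x)^*u(f^jx)$, and using $|v(fx)^*u(x)|^2=\|A_2(x)\|_{\mathrm{HS}}^2/\psi(fx)$ one obtains the identity of real-analytic functions
\[ \|A_n(x)\|_{\mathrm{HS}}^2\ \prod_{j=1}^{n-2}\psi(f^jx)\ =\ \prod_{j=0}^{n-2}\|A_2(f^jx)\|_{\mathrm{HS}}^2,\qquad n\ge2. \]
If $A_2\not\equiv\nul$, then every function appearing here is nonzero and real-analytic; taking logarithms, integrating, and using $f$-invariance of $\mu$ and the integrability fact gives $\int_\XX\ln\|A_n\|_{\mathrm{HS}}^2\,d\mu=(n-1)c_2-(n-2)c_\psi$ with $c_2=\int\ln\|A_2\|_{\mathrm{HS}}^2\,d\mu$ and $c_\psi=\int\ln\psi\,d\mu$ both finite, whence $L_1(f,A)=\tfrac12(c_2-c_\psi)>-\infty$, contradicting the hypothesis. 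Therefore $A_2\equiv\nul$, which is the claim for $r\le1$.

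\emph{Bootstrap and the main obstacle.} For general $r$ the cocycle $\Lambda^rA$ is analytic with $\rank\Lambda^rA(x)=\binom{\rank A(x)}{r}\le1$ for all $x$ and $L_1(f,\Lambda^rA)=L_1(f,A)+\cdots+L_r(f,A)=-\infty$; applying the rank-$\le1$ case gives $\Lambda^r(A_2)=(\Lambda^rA)_2\equiv\nul$, i.e.\ $\rank A_2(x)\le r-1$ for \emph{all} $x$. This already yields nilpotency: applying the same $\Lambda^{\rho}$-trick to $(f^k,A_k)$ (whose $n$-th iterate is $A_{kn}$ and all of whose Lyapunov exponents are $-\infty$) shows $\rho_k:=\max_x\rank A_k(x)$ satisfies $\rho_{2k}\le\rho_k-1$ whenever $\rho_k\ge1$, hence $A_{2^r}\equiv\nul$. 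The difficulty is sharpening $2^r$ to $r+1$. Since $\rho_k$ is non-increasing in $k$, and on the $\mu$-conull set where all $\rank A_j(\cdot)$ attain their maxima, $A_{r+1}\not\equiv\nul$ would (by pigeonhole, $r+1$ values of $\rho_\cdot$ in $\{1,\dots,r\}$) force a ``plateau'' $\rho_k=\rho_{k+1}\ge1$ for some $k\le r$, it suffices to exclude such plateaus. For $k=1$ this is immediate from $\rho_2\le r-1$; for $k\ge2$ the black-box rank-$\le1$ lemma controls only $\rho_{2k}$, not $\rho_{k+1}$, so one must instead show directly that $\ran A_k(x)\cap\ker A(f^kx)\neq\{0\}$ for $\mu$-a.e.\ $x$ — equivalently, that $\ker A_{k+1}(x)=\ker A_k(x)$ generically is impossible. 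This is where I expect the main work to lie: it requires exploiting analyticity of the relevant minors (a refined version of the multiplicative identity above, applied along the orbit) rather than the rank-$\le1$ statement alone. Granting this, $\rho_{r+1}=0$, so $A_{r+1}(x)=\nul$ on a dense set, hence (by analyticity of $A_{r+1}$) everywhere, closing the induction.
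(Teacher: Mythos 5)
Your rank-$\le1$ argument is correct and runs essentially parallel to the paper's Proposition~\ref{prop-rank-1}: both boil down to the same integrability lemma (Lemma~\ref{lem-f-zero}), in your case applied via a telescoping identity among the globally analytic functions $\|A_n\|^2_{\mathrm{HS}}$ and $\|A\|^2_{\mathrm{HS}}$. Your derivation of nilpotency — apply the rank-one case to $(f^k,\Lambda^{\rho_k}A_k)$ to get $\rho_{2k}\le\rho_k-1$, hence $A_{2^r}\equiv\nul$ — is also correct and is a minor variant of the paper's step, which applies Proposition~\ref{prop-rank-1} at the putative minimum of $r_n=\max_x\rank A_n(x)$.

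The sharp bound $A_{r+1}\equiv\nul$ is, however, not established: you correctly reduce it to excluding plateaus $\rho_k=\rho_{k+1}\ge1$, explicitly say you cannot close this for $k\ge2$, and conclude with ``Granting this''. That is a genuine gap, and the route you gesture at (``analyticity of the relevant minors, a refined multiplicative identity'') is not the one the paper uses. The paper proves exactly the missing statement as Lemma~\ref{lem-rank-reduction}: if $\rank A_n\le r'$ everywhere but $\rank A_{n+m}<r'$ everywhere for some $m>1$, then already $\rank A_{n+m-1}<r'$ everywhere. The key idea is dynamical rather than a minor computation. With $\Bb=\{x:\rank A_{n+m-1}(x)<r'\}$, one shows that for $x\notin\Bb$ one has $\ran A_{n+m-1}(x)=\ran A_{n+m-2}(f(x))$ (both $r'$-dimensional, using $m>1$), and the rank drop at step $n+m$ means $\ker A(f^{n+m-1}x)$ meets this common range nontrivially — but that same intersection forces $\rank A_{n+m-1}(f(x))<r'$, i.e.\ $f(x)\in\Bb$. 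Hence $\Bb\cup f^{-1}(\Bb)=\XX$, so $\mu(\Bb)\ge\tfrac12>0$, and Corollary~\ref{cor-f-zero} applied to the analytic function $\Lambda^{r'}A_{n+m-1}$ yields $\Bb=\XX$. Backward induction on $m$ then shows the rank must drop at every single step from $n$ to $n+1$; combined with your nilpotency statement this gives $A_{r+1}\equiv\nul$. Until you supply something equivalent to this lemma, your proof establishes nilpotency but not the stated bound $r+1$.
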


Very concrete normal forms can be found in the one-frequency case. 

\begin{maintheorem}[One frequency case]\label{main-1a}
Let $A\in C^\omega(\RR/\ZZ,\CC^{d\times d})$ and $\alpha\in\RR/\ZZ$ such that $L_1(\alpha,A)=-\infty$. Then the following hold:
 \begin{enumerate}[{\rm (i)}]
 \item There exists a one-periodic analytic function $U\in C^\omega(\RR/\ZZ,\Un(d))$ with values in the unitary group $\Un(d)$, such that $B(x):=U(x+\alpha)^{-1} A(x)\, U(x)$ is upper triangular with zeroes on and below the diagonal.
 More precisely, if the nilpotency degree is $p$ then one can choose $U(x)$ such that $B(x)$ is divided into $p\times p$ blocks (of different size) with upper-triangular block structure,
 \begin{equation}\label{eq-main}
 B(x):=U(x+\alpha)^{-1} A(x)\, U(x)\;=\;\pmat{\nul & D_2(x) & \star & \star \\ & \ddots & \ddots & \star \\ & & \ddots & D_{p}(x) \\ & & & \nul }
 \end{equation}
 \item Assume additionally that for all $n$, $\rank A_n(x)=r_n$ is constant in $x$. Then, there exists a one-periodic analytic function $M\in C^\omega(\RR/\ZZ,\GL(d,\CC))$ such that
 \begin{equation}\label{eq-main-a}
 \Jj:=M(x+\alpha)^{-1} A(x)\, M(x)\;=\;\pmat{J_1 \\ & \ddots \\ & & J_m}
 \end{equation}
 where $m=\dim \ker A(x)$ and
 \begin{equation}\label{eq-main-b}
  J_i\,=\,\pmat {0 & 1 \\ & \ddots & \ddots \\ & & \ddots & 1 \\ & & &  0}
 \end{equation}
\end{enumerate}
 \end{maintheorem}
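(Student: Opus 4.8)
The starting point is Theorem~\ref{main-1}: since $L_1(\alpha,A)=-\infty$ the cocycle $(\alpha,A)$ is nilpotent; let $p\le d$ be its nilpotency degree, so $A_p(x)=\nul$ for all $x$ while $A_{p-1}\not\equiv\nul$. Everything revolves around the \emph{kernel flag} $K_j(x):=\ker A_j(x)$, $j=0,1,\dots,p$. From $A_j(x)=A_{j-1}(x+\alpha)A(x)=A(x+(j-1)\alpha)A_{j-1}(x)$ one reads off at once: (a) at each fixed $x$, $\{\nul\}=K_0(x)\subseteq K_1(x)\subseteq\cdots\subseteq K_p(x)=\CC^d$; and (b) $A(x)K_j(x)\subseteq K_{j-1}(x+\alpha)$. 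Since $\rank A_j(x)$ is maximal off a proper analytic, hence finite, subset of $\RR/\ZZ$, on a dense open set $\Omega$ the dimensions $d_j:=\dim K_j(x)$ are constant; and the implication $K_j(x)=K_{j+1}(x)\Rightarrow K_{j+1}(x)=K_{j+2}(x)$ (obtained from $A_{j+2}(x)v=\nul\iff A(x)v\in K_{j+1}(x+\alpha)\iff A_{j+1}(x)v=\nul$), together with $A_p\equiv\nul$ and minimality of $p$, forces these dimensions to be strictly increasing, $0=d_0<d_1<\cdots<d_p=d$. In particular there are exactly $p$ jumps, which will become the $p$ blocks.

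\textbf{Part (i).} It suffices to produce an analytic full flag $F_0\subseteq F_1\subseteq\cdots\subseteq F_d=\CC^d$, $\dim F_k=k$, refining the kernel flag in the sense that $F_{d_j}(x)\subseteq K_j(x)$ for all $x$ with equality on $\Omega$: then for $d_{j-1}<k\le d_j$ one gets $A(x)F_k(x)\subseteq A(x)K_j(x)\subseteq K_{j-1}(x+\alpha)\subseteq F_{k-1}(x+\alpha)$ (the inclusion $A(x)F_{d_j}(x)\subseteq F_{d_{j-1}}(x+\alpha)$ surviving at the exceptional points by passing to the limit in (b)), so running Gram--Schmidt on an analytic frame adapted to $\{F_k\}$ yields $U\in C^\omega(\RR/\ZZ,\Un(d))$ with $B(x)=U(x+\alpha)^{-1}A(x)U(x)$ strictly upper triangular, and grouping columns according to the $p$ jumps gives the block form~\eqref{eq-main}, the diagonal blocks vanishing because $A(x)$ induces the zero map $K_j/K_{j-1}(x)\to K_j/K_{j-1}(x+\alpha)$. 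So the work is to build the refinement. On $\Omega$, $x\mapsto K_j(x)$ is analytic into the Grassmannian $\mathrm{Gr}(d_j,d)$; its only possible singularities are the finitely many points (isolated once $A$ is complexified to an annulus) at which all $(d-d_j)$-minors of $A_j$ vanish, i.e.\ at which the Pl\"ucker coordinates of $\ran A_j(x)^{\top}$ have a common zero, and clearing that common zero extends the map holomorphically there. This gives analytic subbundles $F_{d_j}$ of rank $d_j$; the inclusions $F_{d_j}(x)\subseteq K_j(x)$ and $F_{d_1}\subseteq\cdots\subseteq F_{d_p}$ hold on $\Omega$, hence everywhere by continuity and upper semicontinuity of $\ker$. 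The unconstrained intermediate dimensions are then filled in by choosing analytic sub-line-bundles interpolating between consecutive $F_{d_j}$'s, using that analytic subbundles split off analytic complements over $\RR/\ZZ$ (an Appendix fact).

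\textbf{Part (ii).} Under the extra hypothesis every $K_j$ is a genuine analytic subbundle, and constancy of all $\rank A_n$ forces every rank occurring below to be constant too (e.g.\ $\rank(A(x)|_{K_j(x)})=d_j-d_1$, since $\ker(A(x)|_{K_j(x)})=K_1(x)$). I build analytic Jordan chains in imitation of the classical nilpotent Jordan form. Put $W_{p+1}:=\{\nul\}$ and, descending in $j$, choose analytic subbundles with $K_j(x)=K_{j-1}(x)\oplus A(x-\alpha)W_{j+1}(x-\alpha)\oplus W_j'(x)$, setting $W_j(x):=A(x-\alpha)W_{j+1}(x-\alpha)\oplus W_j'(x)$; this is legitimate because $A(x-\alpha)W_{j+1}(x-\alpha)$ is an analytic subbundle of $K_j(x)$ meeting $K_{j-1}(x)$ trivially (if $A(x-\alpha)w\in K_{j-1}(x)$ then $A_j(x-\alpha)w=\nul$, so $w\in K_j(x-\alpha)\cap W_{j+1}(x-\alpha)=\{\nul\}$) and analytic subbundles admit analytic complements over $\RR/\ZZ$. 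This produces $\CC^d=\bigoplus_{j=1}^p W_j(x)$, with $A(x)$ injective on $W_{j+1}(x)$ and $A(x)W_{j+1}(x)\subseteq W_j(x+\alpha)$, and $A(x)W_1(x)=\{\nul\}$. For each $j$, fix an analytic frame $f^{(j)}_1,\dots,f^{(j)}_{q_j}$ of $W_j'$ (so $q_j:=\dim W_j'$), and set $g^{(j)}_{l,i}(x):=A_i(x-i\alpha)f^{(j)}_l(x-i\alpha)$ for $i=0,\dots,j-1$; then $g^{(j)}_{l,j}\equiv\nul$ (as $f^{(j)}_l\in K_j$) and $A(x)g^{(j)}_{l,i}(x)=g^{(j)}_{l,i+1}(x+\alpha)$. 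Hence $E^{(j)}_l(x):=\mathrm{span}\{g^{(j)}_{l,j-1}(x),\dots,g^{(j)}_{l,0}(x)\}$ is a rank-$j$ analytic subbundle with $A(x)E^{(j)}_l(x)\subseteq E^{(j)}_l(x+\alpha)$ on which, in this ordered frame, $A$ acts as the constant size-$j$ Jordan block~\eqref{eq-main-b}. Unwinding the recursion shows the $E^{(j)}_l$, over all $(j,l)$, are in direct sum and span $\CC^d$, so taking their frames as the columns of $M(x)$ gives $M\in C^\omega(\RR/\ZZ,\GL(d,\CC))$ with $M(x+\alpha)^{-1}A(x)M(x)=\mathrm{diag}(J_1,\dots,J_m)$; the number of blocks is $\sum_j q_j=\dim W_1=d_1=\dim\ker A(x)=m$.

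\textbf{Main difficulty.} Everything delicate is in part (i): showing that the generic kernel bundle $K_j|_\Omega$ extends \emph{analytically} — not merely continuously — across the degeneracy points while still obeying $F_{d_j}\subseteq K_j$ and the shift inclusion there. This is exactly where one-dimensionality of the base enters (the Pl\"ucker/removable-singularity argument, which I would isolate as an Appendix lemma), and it is precisely the feature that breaks down in the $C^\infty$ category. In part (ii) the only non-elementary ingredient is the existence of analytic complements of analytic subbundles over $\RR/\ZZ$; the rest is bookkeeping for the Jordan recursion.
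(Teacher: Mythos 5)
Your part (i) is essentially the paper's argument: both build an analytic orthonormal frame adapted to the (analytically extended) kernel flag $K_j(x)=\ker A_j(x)$, with the extension across degeneracy points coming from the Pl\"ucker/clearing-common-zeros argument that the paper isolates as Lemma~\ref{lem-lift}(ii) and Lemma~\ref{lem-an-sub}. (The paper does this a hair more directly by taking partial isometries onto $(\ker A_{n-1})^\perp\cap\ker A_n$ rather than refining to a full flag and running Gram--Schmidt, but that is a cosmetic difference: the block structure with zero diagonal blocks is already strictly upper triangular, so the intermediate $F_k$ you insert are not needed.) Your part (ii), however, is a genuinely different construction. The paper works with the ascending range filtration $\VV_n(x)=\ran A_{p-n}(x-(p-n)\alpha)$, builds chains from the bottom $\VV_1\subset\ker A$ upward, and needs an analytic partial \emph{right-inverse} $\hat A_{\VV_{n-1}}$ (constructed via the constant-rank hypothesis) to lift $v_{i,1}(x+\alpha)$ back to $v_{i,0}(x)$. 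You instead descend the kernel flag, at each height $j$ choosing an analytic complement $W_j'(x)$ of $K_{j-1}(x)\oplus A(x-\alpha)W_{j+1}(x-\alpha)$ in $K_j(x)$ (the ``new'' chain tops of length $j$), and then only ever apply $A$ \emph{forward} along orbits via $g^{(j)}_{l,i}(x)=A_i(x-i\alpha)f^{(j)}_l(x-i\alpha)$. This avoids the analytic right-inverse entirely, at the cost of having to verify the transversality $A(x-\alpha)W_{j+1}(x-\alpha)\cap K_{j-1}(x)=\{\nul\}$, which your one-line computation does correctly. Both routes are valid analytic adaptations of the classical nilpotent Jordan-form algorithm, and your count $\sum_j q_j=\dim W_1=\dim\ker A=m$ checks out. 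One small point worth tightening: you assert in passing that ``constancy of all $\rank A_n$ forces every rank occurring below to be constant too'' and give only one example; the ranks your construction actually needs to be constant ($\dim K_j$, $\dim W_j$, and $\rank A|_{W_{j+1}}$) are constant for more direct reasons (the hypothesis gives $\dim K_j=d-r_j$, the $W_j$ are built inductively with constant dimension, and $A$ is injective on $W_{j+1}$), so it is cleaner to say that rather than appeal to a blanket claim.
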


 Adding a diagonal perturbation $B'=\diag(b_1,\ldots,b_d)$, $|b_j|>|b_{j+1}|$ to $B(x)$ and conjugating it back we obtain the following for
 $A'(x)=U(x+\alpha) B' U(x)^{-1}$ as a corollary of the above theorem.
\begin{maintheorem}
 Let $A\in C^\omega(\RR/\ZZ,\CC^{d\times d})$ and $\alpha\in\RR/\ZZ$ such that $L_1(\alpha,A)=-\infty$, in which case $L_k(\alpha,A)=-\infty$ for all $k=1,\ldots,d$.
 Then, there exists $A'\in C^\omega(\RR/\ZZ,\CC^{d\times d})$ such that for any $\varepsilon\neq0$ all Lyapunov exponents of $(\alpha,A+\varepsilon A')$ are distinct.
 Hence, there are arbitrarily small analytic perturbations with simple Lyapunov spectrum. 
\end{maintheorem}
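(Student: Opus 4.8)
The plan is to obtain this statement as an essentially immediate corollary of Theorem~\ref{main-1a}(i): I would analytically conjugate $A$ to a strictly upper triangular cocycle by a unitary loop, add to that a constant diagonal matrix whose entries have pairwise distinct moduli, and conjugate back; the resulting perturbation $A'$ and the admissibility of the conjugation take care of everything else.

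In detail: the parenthetical claim that $L_1(\alpha,A)=-\infty$ forces $L_k(\alpha,A)=-\infty$ for all $k$ is immediate from $L_1\geq L_2\geq\cdots\geq L_d$. Then I would apply Theorem~\ref{main-1a}(i), which needs nothing beyond $L_1=-\infty$, to get $U\in C^\omega(\RR/\ZZ,\Un(d))$ with $B(x):=U(x+\alpha)^{-1}A(x)U(x)$ upper triangular with zero diagonal. Choose constants $b_1,\dots,b_d\in\CC\setminus\{0\}$ with $|b_1|>|b_2|>\cdots>|b_d|$, set $B':=\diag(b_1,\dots,b_d)$, and define
\[
A'(x)\;:=\;U(x+\alpha)\,B'\,U(x)^{-1}\;=\;U(x+\alpha)\,B'\,U(x)^{*}\,.
\]
Since $U$ is $\Un(d)$-valued and analytic, $x\mapsto U(x)^{-1}=U(x)^{*}$ is analytic, hence $A'\in C^\omega(\RR/\ZZ,\CC^{d\times d})$, and $A+\varepsilon A'\to A$ in the analytic topology as $\varepsilon\to 0$, so these are arbitrarily small analytic perturbations.

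It then remains to check that $(\alpha,A+\varepsilon A')$ has simple Lyapunov spectrum for every $\varepsilon\neq 0$. Because $U$ is an analytic loop with values in the compact group $\Un(d)$ (so $U,U^{-1}$ are bounded), conjugation by it preserves Lyapunov exponents, and
\[
U(x+\alpha)^{-1}\bigl(A(x)+\varepsilon A'(x)\bigr)U(x)\;=\;B(x)+\varepsilon B'\,,
\]
so it suffices to compute $L_k(\alpha,B+\varepsilon B')$. That cocycle is upper triangular with constant diagonal entries $\varepsilon b_1,\dots,\varepsilon b_d$, so by the standard fact about triangular cocycles collected in the Appendix its Lyapunov exponents are $\int_{\RR/\ZZ}\ln|\varepsilon b_j|\,dx=\ln|\varepsilon|+\ln|b_j|$, $j=1,\dots,d$; these are pairwise distinct because $|b_1|>\cdots>|b_d|>0$. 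This yields $L_1(\alpha,A+\varepsilon A')>\cdots>L_d(\alpha,A+\varepsilon A')$, as desired. Since everything is routine once Theorem~\ref{main-1a}(i) is in hand, I do not expect a genuine obstacle here; the one non-formal ingredient is the triangular-cocycle formula for the exponents, and this is precisely where the strict upper-triangularity of $B$ is used — it forces the perturbed diagonal to be exactly $\varepsilon B'$ and guarantees that the analytic, bounded off-diagonal part of $B$ cannot lift $L_1$ above $\ln|\varepsilon b_1|$.
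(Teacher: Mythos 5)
Your argument is precisely the paper's: apply Theorem~\ref{main-1a}(i) to get $B(x)=U(x+\alpha)^{-1}A(x)U(x)$ strictly upper triangular, set $A'(x)=U(x+\alpha)\,\diag(b_1,\dots,b_d)\,U(x)^{-1}$ with $|b_1|>\cdots>|b_d|>0$, and observe that $(\alpha,A+\varepsilon A')$ is unitarily conjugate to the upper triangular cocycle $(\alpha, B+\varepsilon B')$ with constant diagonal $\varepsilon b_1,\dots,\varepsilon b_d$, whose Lyapunov exponents are $\ln|\varepsilon b_j|$. One small correction: the Appendix of the paper does not in fact contain the triangular-cocycle formula you cite; you should instead note that since the diagonal entries are constants with strictly decreasing moduli and the off-diagonal entries of $B$ are bounded, a straightforward induction on $d$ (bounding $\|(B+\varepsilon B')_n\|$ by a geometric series) shows $L_1=\ln|\varepsilon b_1|$, and then the same argument applied to the invariant flag of coordinate subspaces gives $L_j=\ln|\varepsilon b_j|$ for all $j$.
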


\begin{rem*} In analogy to {\rm \cite{ACO}} we call $(\alpha,B)$ and $(\alpha,\Jj)$ analytic Jordan normal forms of $(\alpha,A)$. 
As the form $(\alpha,\Jj)$ is much more restrictive, we may call it a completely reduced Jordan normal form.
Let us make some remarks about the existence of such normal forms in the analytic category.
\begin{enumerate}[{\rm (i)}]
\item The condition needed for Theorem~\ref{main-1a}~{\rm (ii)} is satisfied on a dense set of cocycles $(\alpha,A)$ with $L_1(\alpha,A)=-\infty$.
For small enough $t$ one can define $A(x+it)$ by analyticity and a local Taylor expansion. For any $n$ up to the nilpotency degree, there is only a finite set of $(x,t)$ within $[0,1]\times [-\delta,\delta]$ where
the $\rank A_n(x+it)$ is not maximal (equal to $\max_{x} \rank A_n(x+it)$ which is independent of $t$). This follows from analyticity. Hence, for small enough $t$, the cocycle $(\alpha,A(\cdot+it))$ satisfies the condition.
\item For a completely reduced Jordan form as in Theorem~\ref{main-1a}~{\rm (ii)} one may want to relax \eqref{eq-main-b} and allow $\Jj(x)$ and $J_i(x)$ to depend on $x$, where $J_i(x)$ still has only non-zero entries on the superdiagonal\footnote{entries just above the diagonal} which may become $0$ for some $x$. Then the condition that the ranks of $A_{n}(x)$ are constant is not necessary for such a conjugation. However,
$L_1(\alpha,A)=-\infty$ alone is also not sufficient in this case. Examples where such a form can not be reached by (everywhere defined) analytic conjugations are
$$
A(x)=\pmat{ 0 & \cos(2\pi x) & \sin(2\pi x) \\ 0 & 0 & 1 \\ 0 & 0 & 0} \qtx{or} A'(x)=\pmat{0 & 0 & 0 & \cos(2\pi x) \\ 0 & 0 & 1 & 0 \\ 0 & 0 & 0 & \sin(2\pi x) \\ 0 & 0 & 0 & 0}\;.
$$
In both cases the nilpotency degree is 3, $A_3(x)=\nul, \; A'_3(x)=\nul$.
In the first scenario, $\rank A(x)$ is not constant, in the second one, $\rank A'_2(x)$ is not constant.\\
If one allows the conjugation $M(x)$ to be not invertible in finitely many points, then one can always get such a conjugation so that $M(x+\alpha)^{-1} A(x) M(x)=\Jj$ (for almost all $x$).
But as $M^{-1}(x)$ is then not defined for some $x$ (and of course not analytic) it is not an analytic conjugation of the cocycle.

\item In the general analytic case with a higher-dimensional base $\XX$ one can not even necessarily get 'normal forms' like $B(x)$ above by everywhere analytic conjugations.
 One crucial ingredient missing in the general case is an analogue of Lemma~\ref{lem-lift}.
 Let us give an example of an analytic nilpotent 2-frequency $\CC^{2\times 2}$ cocycle that can not be conjugated to such a normal form.
 Let $\alpha=(\alpha_1, \alpha_2)$ be the translation vector for the base dynamics $f(x,y)=(x,y)+\alpha$, $(x,y)\in\RR^2/\ZZ^2$, and let
$$
A(x,y)=\pmat{-\sin(2\pi(x+\alpha_1))\sin (2\pi y) & \sin(2\pi(x+\alpha_1))\sin (2\pi x) \\ -\sin(2\pi(y+\alpha_2))\sin (2\pi y) & \sin(2\pi(y+\alpha_2)) \sin (2\pi x)}
$$
Then we have $A(x,y)$ with rank $1$ almost surely, $A_2(x,y)=\nul$ and the direction of the kernel of $A(x,y)$ (on projective space $P\CC^2$) has no limit at $(x,y)=(0,0)$ or $(x,y)=(\frac12,\frac12)$ which 
contradicts with analyticity of $M(x,y)$ to get 
$$
[M(x+\alpha_1,y+\alpha_2)]^{-1} A(x,y) M(x,y)=\pmat{0 & c(x,y) \\ 0 & 0}\;.
$$
One may choose $M(x,y)=\pmat{\sin(2\pi x) & -\sin(2\pi y) \\ \sin(2\pi y) & \sin(2\pi x)}$ for conjugating to such a normal form, however, the inverse of $M(x,y)$ does not exist at $(x,y)=(0,0)$ or $(x,y)=(\frac12,\frac12)$.
\end{enumerate}
\end{rem*}

Next, we have a look at analytic one-frequency cocycles where some but not all Lyapunov exponents are $-\infty$. In this case there is an obvious gap after the last finite Lyapunov exponent and one can ask the question about domination. In general this was classified in \cite{AJS}, however, in this special case the classification is much simpler. For completeness let us repeat the definition of domination.
Let $G(k,d)$ denote the Grassmannian manifold of complex $k$-dimensional subspaces of $\CC^d$ and $C(\XX,G(k,d))$ the set of continuous functions from $\XX$ to $G(k,d)$.

\begin{definition}
A continuous $\CC^{d\times d}$ cocycle $(f,A)$ over $(\XX,\mu)$ is $k$-dominated ($k<d$) if there is a continuous splitting
of the space $\CC^d$ in a relatively stable and a relatively unstable invariant space, i.e. there exist $u\in C(\XX,G(k,d))$, $s\in C(\XX,G(d-k,d))$ such that
for all $x\in \XX$,
$$
\CC^d=u(x)\oplus s(x)\,,\quad A(x) u(x)=u(f(x))\,,\quad A(x) s(x) \subset s(f(x))
$$
and for some $n\in \NN$ and all $0\neq v \in u(x),\,0\neq w \in s(x)$ and all $x\in \XX$ one has
$$
\|A_n(x) v\,/\,\|v\|\;>\; \|A_n(x) w\|\,/\,\|w\|\;. 
$$
Particularly, the kernel is always inside the relatively stable space, $\ker A(x)\subset s(x)$.
\end{definition}

\begin{maintheorem}\label{main-2} 
\vbox{Let $A\in C^\omega(\RR/\ZZ,\CC^{d\times d})$ and $\alpha\in\RR/\ZZ$ such that $L_{k+1}(\alpha,A)=-\infty$ and $L_k(\alpha,A)>-\infty$. Then the following hold.
\begin{enumerate}[{\rm (i)}]
 \item  There exists
 $U\in C^\omega(\RR/\ZZ,\Un(d))$ such that 
 $$
 B(x)\,:=\,U(x+\alpha)^{-1} A(x)\, U(x)\;=\;\pmat{\bfa(x) & \bfb(x) \\ \nul & \bfd(x) } \qtx{where} \bfa(x)=\pmat{0 & \star & \star \\ & \ddots & \star \\ & & 0}
 $$ 
 is an upper triangular $(d-k)\times (d-k)$ matrix with zeros on and below the diagonal, hence $(\alpha,\bfa)$ is nilpotent and
 $\bfd(x)$ is an almost surely invertible $k\times k$ matrix. In particular, $\rank A_{d-k}(x)\leq k$ for all $x$. Of course, the block $(\alpha,\bfa)$ can also be conjugated to a Jordan form by an analytic dynamical conjugation as described above.
 \item The cocycle $(\alpha,A)$ is $k$-dominated if and only if $\rank A_{d-k}(x) =k$ for all $x$.
 It is also equivalent to $\bfd(x)$ as defined in {\rm (i)} being invertible for all $x\in \RR/\ZZ$. 
In this case there is some analytic $(d-k)\times k$ matrix $M(x)$ such that with $B(x)$ as in {\rm (i)} we have
 $$
 C(x)\,:=\,\pmat{\one & M(x+\alpha) \\ \nul & \one}^{-1}\,B(x)\,\pmat{\one & M(x) \\ \nul & \one}\,=\,\pmat{\bfa(x) & \nul \\ \nul & \bfd(x)}
 $$
This conjugation corresponds to the dominated splitting.
\end{enumerate}}
\end{maintheorem}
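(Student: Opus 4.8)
The plan is to extract from the Lyapunov data an analytic $A$-invariant subbundle $\Ss(x)\subset\CC^d$ of dimension $d-k$ on which the cocycle is nilpotent, bring $(\alpha,A)$ to block triangular form along $\Ss$, and, when $\bfd$ is invertible everywhere, remove the off‑diagonal block via a cohomological equation that terminates thanks to nilpotency. For (i): since $L_1(\alpha,\Lambda^{k+1}A)=L_1+\cdots+L_{k+1}=-\infty$, Theorem~\ref{main-1} shows $(\alpha,\Lambda^{k+1}A)$ is nilpotent, so $\rank A_N(x)\le k$ for all $x$ once $N$ is large; on the other hand $L_k>-\infty$ forces $\rank A_N(x)\ge k$ for a.e.\ $x$ (from $\sigma_k(A_N)\le\|A(f^{N-1}\cdot)\|\,\sigma_k(A_{N-1})$, $\int\ln\|A\|\,d\mu>-\infty$ by analyticity, and $\tfrac1N\int\ln\sigma_k(A_N)\to L_k$). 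Hence for large $N$ the nested subspaces $\ker A_N(x)$ have dimension $d-k$ off a finite set and stabilize in $N$ there; by the one‑frequency lemma (Lemma~\ref{lem-lift}) the map $x\mapsto\ker A_N(x)$ extends to an analytic $\Ss:\RR/\ZZ\to G(d-k,d)$, independent of $N$, with $\Ss(x)\subset\ker A_N(x)$ and $A(x)\Ss(x)\subset\Ss(x+\alpha)$ (checked first on the generic set, then everywhere by continuity), and with $A|_\Ss$ nilpotent. Lifting $\Ss$ to an analytic unitary frame (again Lemma~\ref{lem-lift}) yields $U\in C^\omega(\RR/\ZZ,\Un(d))$ with $B=U(\cdot+\alpha)^{-1}A\,U=\smat{\bfa&\bfb\\ \nul&\bfd}$, where $\bfa$ is the $(d-k)\times(d-k)$ cocycle $A|_\Ss$ read in this frame and is therefore nilpotent; by Theorem~\ref{main-1a}(i), absorbed into $U$, one may take $\bfa$ upper triangular with zero diagonal, and further to a Jordan form as there. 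Theorem~\ref{main-1} applied to $\bfa$ gives $\bfa_{d-k}\equiv\nul$, so $B_{d-k}=\smat{\nul&\star\\ \nul&\bfd_{d-k}}$ and $\rank A_{d-k}(x)\le k$ for all $x$. Finally, the Lyapunov spectrum of a block triangular cocycle is the union of those of $\bfa$ (all equal to $-\infty$) and of $\bfd$ (hence exactly $L_1,\dots,L_k$), so $\int\ln|\det\bfd|\,d\mu=L_1+\cdots+L_k>-\infty$, forcing $\det\bfd\not\equiv0$, i.e.\ $\bfd(x)$ a.s.\ invertible.

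For (ii): if $\bfd(x)$ is invertible for every $x$, I would solve $M(\cdot+\alpha)=\bfa\,M\,\bfd^{-1}+\bfb\,\bfd^{-1}$ for an analytic $(d-k)\times k$ matrix $M$; iterating this equation backwards, after $d-k$ steps the coefficient of the unknown is $\bfa_{d-k}(\cdot)=\nul$, so
\[
M(x)=\sum_{j=1}^{d-k}\bfa(x-\alpha)\cdots\bfa(x-(j-1)\alpha)\,\bfb(x-j\alpha)\,\bfd(x-j\alpha)^{-1}\cdots\bfd(x-\alpha)^{-1}
\]
is a finite sum of analytic terms (here $\bfd^{-1}$ is analytic precisely because $\bfd$ is everywhere invertible). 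Conjugating $B$ by $\smat{\one&M\\ \nul&\one}$ gives $\smat{\bfa&\nul\\ \nul&\bfd}$, and the resulting analytic invariant splitting $\CC^d=u(x)\oplus s(x)$ (pulled back through $U\smat{\one&M\\ \nul&\one}$) satisfies $A_{d-k}(x)|_{s(x)}=\nul$ while $A_{d-k}(x)|_{u(x)}$ is an isomorphism, uniformly bounded below by compactness; this is a $k$-dominated splitting (take $n=d-k$). Conversely, if $(\alpha,A)$ is $k$-dominated with splitting $u\oplus s$, then $A|_u$ is injective (otherwise the strict inequality in the definition fails), so $\rank A_N(x)\ge k$ and $\ker A_N(x)\subset s(x)$ for all $N$; for large $N$ this pins $\dim\ker A_N(x)=d-k$ for every $x$, hence $\ker A_N(x)=s(x)=\Ss(x)$, $u(x)$ is an analytic invariant complement, and the map induced on $\CC^d/\Ss(x)$, which is $\bfd(x)$, is invertible for all $x$. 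Since from the block form $\bfd(x)$ invertible for all $x$ is the same as $\rank B_{d-k}(x)=\rank\smat{\nul&\star\\ \nul&\bfd_{d-k}(x)}=k$ for all $x$, i.e.\ $\rank A_{d-k}(x)=k$ for all $x$, the three conditions coincide.

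The main obstacle is the construction of $\Ss$ as a genuinely everywhere‑analytic subbundle, together with its lift to an analytic unitary frame: $\ker A_N(x)$ only has the correct dimension off a finite set, and the removable‑singularity/lifting step that repairs this is exactly where the one‑frequency hypothesis enters—Lemma~\ref{lem-lift}—and, as the remarks after Theorem~\ref{main-1a} show, it genuinely breaks down over higher‑dimensional bases. Once $\Ss$ is in hand, the block triangularization, the Lyapunov bookkeeping for $\bfd$, and the terminating cohomological equation in (ii) are all essentially formal.
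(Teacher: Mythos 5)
Your proposal is correct and follows essentially the same route as the paper: extract the $(d-k)$-dimensional invariant subbundle as $\ker A_N$ for $N$ large (the paper uses $\ker A_p$ with $p$ the first step at which the maximal rank reaches $k$, which is the same object once the kernels stabilize), lift it to an analytic unitary frame via Lemma~\ref{lem-lift} to obtain the block-triangular form, and, when $\bfd$ is everywhere invertible, kill the off-diagonal block by a finite cohomological iteration that terminates because $\bfa_{d-k}=\nul$ — your closed-form series for $M$ is exactly the sum $\sum_{n=1}^{p}M_n$ from the paper's inductive scheme. The only cosmetic differences are that you justify $\rank A_N\ge k$ a.e.\ via subadditivity of $\ln\sigma_k$ rather than via $L_1(\alpha,\Lambda^k A)>-\infty$, and you deduce a.s.\ invertibility of $\bfd$ from the Lyapunov spectrum of the block-triangular cocycle rather than from $\ker A_{p+1}=\ker A_p$ a.e.; both are valid and amount to the same bookkeeping.
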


\begin{rem*}
Without domination it is not always true that one can obtain this block-diagonal form with an analytic (or everywhere defined) conjugation.
A counter-example is the following cocycle, $A(x)=B(x)=\smat{0 & \cos(2\pi x) \\ 0 & \sin(2\pi x)}$ with any frequency $\alpha\in\RR/\ZZ-\{0\}$.
A diagonal, analytic conjugated cocycle would necessarily be of the form $C(x)=M^{-1}(x+\alpha) A(x) M(x)=\smat{0 & 0 \\ 0 & c(x)}$.
As $A_2(0)=A(\alpha)A(0)=\nul$ one has $c(0)=0$ or $c(\alpha)=0$, i.e. $C(x)=\nul$ for either $x=0$ or $x=\alpha$. But, $A(x)=M(x+\alpha) C(x) M(x)^{-1} \neq \nul$ 
for any $x$. So there is a contradiction if $M(x)$ is invertible for all $x$.

\end{rem*}

\section{Rank one case \label{sec:rank1}}

In this section we will basically prove Theorem~\ref{main-1} and Theorem~\ref{main-1a} in the rank one case by the following Proposition.

\begin{prop}\label{prop-rank-1} We have the following.
\begin{enumerate}[{\rm (i)}]
\item Assume that $(f,A)$ is an analytic cocycle over a compact and connected space $(\XX,\mu)$ as defined in Definition~\ref{def:1}. 
Assume further that $A\in C^\omega(\XX,\CC^{d\times d})$ has maximal rank $1$ and $L_1(\alpha,A)=-\infty$. Then, $A_2(x)=\nul$ for all $x \in \XX$.
\item Let $(\alpha,A)$ be an analytic one-frequency cocycle, i.e. $\alpha\in\RR/\ZZ$, $A\in C^\omega(\RR/\ZZ,\CC^{d\times d})$, and let $L_1(\alpha,A)=-\infty$.
Then, there is a one-periodic analytic function $c(x)$ and a one-periodic analytic unitary function $U(x)\in\Un(d)$ such that
$$
U^*(x+\alpha) \,A(x)\, U(x)\,=\,\pmat{ \nul & \\ & 0 & c(x) \\ & 0 & 0 }\;.
$$
\end{enumerate}
\end{prop}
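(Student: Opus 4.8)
The plan for part (i) is to derive an exact product formula for the operator norms along an orbit of a rank-one cocycle and combine it with the integrability of $\log$ of a non‑trivial real‑analytic function. We may assume $A\not\equiv\nul$, so that $\rank A(x)\le 1$ for all $x$. For $y$ with $A(y)\ne\nul$ let $e(y)$ be a unit vector spanning $\ran A(y)$; since $e(y)e(y)^*=A(y)A(y)^*/\Tr\!\big(A(y)A(y)^*\big)$, the function
\[
\gamma(y)^2\;:=\;\frac{\|A_2(y)\|^2}{\|A(y)\|^2}\;=\;\big\|A(f(y))\,e(y)\big\|^2
\]
(with $\gamma(y):=0$ when $A(y)=\nul$) is measurable. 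Because each factor of $A_n(x)=A(f^{n-1}x)\cdots A(x)$ has rank at most one, $\ran A_n(x)=\ran A(f^{n-1}x)$ whenever $A_n(x)\ne\nul$, and a straightforward induction on $n$ (checking separately the points where $A_n(x)=\nul$) gives $\|A_n(x)\|=\|A(x)\|\prod_{j=0}^{n-2}\gamma(f^j x)$ for every $x$. Integrating and using that $f$ preserves $\mu$,
\[
\int_\XX \ln\|A_n(x)\|\,d\mu \;=\; \int_\XX \ln\|A(x)\|\,d\mu\;+\;(n-1)\int_\XX \ln\gamma(x)\,d\mu ,
\]
so dividing by $n$ and letting $n\to\infty$ yields $L_1(f,A)=\int_\XX\ln\gamma\,d\mu$. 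Now $\|A\|^2$ and $\|A_2\|^2$ are finite sums of squares of real‑analytic real functions, hence real‑analytic, and $\|A\|^2\not\equiv0$; by the elementary fact (recorded in the Appendix) that $\log|F|\in L^1(\mu)$ for $F$ real‑analytic and not identically zero on the connected space $\XX$, we get $-\infty<\int_\XX\ln\|A\|\,d\mu<\infty$. Since $\ln\gamma=\ln\|A_2\|-\ln\|A\|$ $\mu$‑a.e., the hypothesis $L_1(f,A)=-\infty$ forces $\int_\XX\ln\|A_2\|\,d\mu=-\infty$; but if $A_2\not\equiv\nul$ the same Appendix fact applied to $\|A_2\|^2\not\equiv0$ gives $\int_\XX\ln\|A_2\|\,d\mu>-\infty$, a contradiction. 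Hence $A_2(x)=\nul$ for all $x$.

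For part (ii) we assume $A\not\equiv\nul$; since the right‑hand side of the claimed normal form has rank at most one we are in the situation of (i), so $A(x+\alpha)A(x)=\nul$ for all $x$. As $A$ is analytic on the connected circle $\RR/\ZZ$ it vanishes at only finitely many points, and near such a point a factorization $A(x)=(x-x_0)^k B(x)$ with $B$ analytic, $B(x_0)\ne\nul$ and $\rank B(x_0)=1$ shows that $x\mapsto\ran A(x)$ and $x\mapsto(\ker A(x))^\perp=\ran A(x)^*$ extend to analytic maps $\hat u,\hat v\colon\RR/\ZZ\to\PP(\CC^d)$. The relation $A(x+\alpha)A(x)=\nul$ says $\ran A(x)\subseteq\ker A(x+\alpha)$ on a dense set, so by continuity $\hat u(x-\alpha)\perp\hat v(x)$ for every $x$. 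Applying Lemma~\ref{lem-lift} we lift $\hat v$ and $x\mapsto\hat u(x-\alpha)$ to one‑periodic analytic unit‑vector fields $v,w\in C^\omega(\RR/\ZZ,\CC^d)$ with $\CC v(x)=\hat v(x)$ and $\CC w(x)=\hat u(x-\alpha)$; by the orthogonality above $w(x)\perp v(x)$ for all $x$. Completing $(w,v)$ to a one‑periodic analytic orthonormal frame --- reducing again, via Lemma~\ref{lem-lift} applied to the analytic subbundle $\ran(\one-vv^*-ww^*)$, to lifting analytic bundles of smaller rank over $\RR/\ZZ$ --- we obtain $U=(U_1,\dots,U_{d-2},w,v)\in C^\omega(\RR/\ZZ,\Un(d))$. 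Set $c(x):=w(x+\alpha)^*A(x)v(x)$, a one‑periodic analytic function. Then $A(x)U_j(x)=\nul$ for $j<d$ because $U_j(x)\perp v(x)$ and $v(x)^\perp\subseteq\ker A(x)$, while $A(x)v(x)\in\ran A(x)\subseteq\CC\,w(x+\alpha)$ forces $A(x)v(x)=c(x)\,w(x+\alpha)$; hence $U_i(x+\alpha)^*A(x)U_j(x)=c(x)\,\delta_{i,d-1}\delta_{j,d}$, which is exactly the asserted block form $U^*(x+\alpha)A(x)U(x)=\smat{\nul & \\ & 0 & c(x) \\ & 0 & 0}$.

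The algebraic heart is part (i): once the product formula $\|A_n\|=\|A\|\prod_j\gamma\circ f^j$ is in place the argument is routine, the only care being the bookkeeping with rank‑one matrices at the finitely many degenerate points. The real obstacle is in part (ii) --- upgrading the pointwise identity $A_2\equiv\nul$ to an \emph{analytic, periodic, unitary} conjugation. Everything there rests on the one‑frequency lifting lemma (Lemma~\ref{lem-lift}): extending $\hat u,\hat v$ analytically across the zeros of $A$, choosing periodic analytic unit representatives while absorbing the monodromy phase, and completing to a unitary frame. It is precisely this ingredient that is unavailable over a higher‑dimensional base.
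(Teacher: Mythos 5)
Your proof is correct and follows essentially the same route as the paper: in part (i) you identify $L_1$ with $\int\ln\|A_2\|-\int\ln\|A\|$ (the paper uses $g_1=\Tr(A^*A)$, $g_2=\Tr(A_2^*A_2)$, which coincide with $\|A\|^2,\|A_2\|^2$ since rank $\le 1$) and invoke Lemma~\ref{lem-f-zero}, and in part (ii) you build the unitary frame via Lemma~\ref{lem-lift} from the analytically continued range and co-range directions, exactly as the paper does with $\phi(x-\alpha)$ and $\psi(x)$. The only organizational difference is that you prove (i) first and feed $A_2\equiv\nul$ into (ii), whereas the paper derives (ii) directly from the factorization $A=c\,\phi\,\psi^*$ and then adapts the computation (via $g_1,g_2$) for the general base in (i).
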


\begin{proof}
We will first show (ii). 
The case $A(x)=\nul$ for all $x$ is trivial, so assume $A(x)\neq \nul$ for some $x$.
 We find some column-vector $\varphi(x)$ of $A(x)$ which is not always zero. By Lemma~\ref{lem-lift} we find a one-periodic, real analytic 
function $\phi(x)$ with $\|\phi(x)\|=1$ such that $\varphi(x)$ is a complex multiple of $\phi(x)$. 
Doing the same with $A^*(x)$ we obtain some one-periodic analytic function $\psi(x)$ with $\|\psi(x)\|=1$. As $\ran A(x)\subset \phi(x)\CC,\,\ran A^*(x)\subset \psi(x)\CC$ (at most rank 1), we find some $c(x)$ such that
\begin{equation}\label{eq-rank-1}
A(x)\,=\,c(x)\,\phi(x)\, \psi^*(x)
\end{equation}
where $\phi(x)$ is a column-vector and $\psi^*(x)$ a row vector and their product a matrix. As $A(x)$ depends analytically on $x$, $c(x)$ has to be analytic.
Thus,
$$
A_n(x)=\left( \prod_{k=0}^{n-1} c(x+k\alpha) \right) \left( \prod_{k=0}^{n-2} \psi^*(x+(k+1)\alpha) \phi(x+k\alpha) \right)\, \phi(x+(n-1)\alpha) \,\psi^*(x)
$$
which leads to
\begin{equation}\label{eq-L_1-rank1}
-\infty\,=\,L_1(\alpha,A)\,=\,\int_0^1 \ln|c(x)|\,dx\,+\,\int_0^1 \ln| \psi^*(x+\alpha) \phi(x)|\,dx\;.
\end{equation}
By Lemma~\ref{lem-f-zero} this implies $\psi^*(x+\alpha) \phi(x)=0$ for all $x$ as $c(x)$ is not the zero function.
This gives $A_2(x)=\nul$. Moreover by Lemma~\ref{lem-lift}~(ii) one can extend $\phi(x-\alpha),\psi(x)$ to an orthonormal basis\footnote{Indeed this task is equivalent in finding $\Theta(x)$ as in Lemma~\ref{lem-lift}~(ii) where the range of $\Theta(x)$ is the orthogonal complement
of the space spanned by $\phi(x-\alpha)$ and $\psi(x)$.} defining a unitary matrix
$U(x)=(\Theta(x),\phi(x-\alpha),\psi(x))$ such that $U^*(x+\alpha) \,A(x)\, U(x)\,=\,\smat{\nul \\ & 0 & c(x) \\ & 0 & 0  }$\;.

In the general case (i) we still find functions $c(x)$ $\phi(x)$, $\psi(x)$ with $\|\phi(x)\|=\|\psi(x)\|=1$ satisfying \eqref{eq-rank-1}. However, we can only guarantee analyticity at points $x$ where $\rank A(x)=1$. 
In general, there might be some union of sub-manifolds of $\XX$ where $A(x)=\nul$, (i.e. $c(x)=0$) and where $c(x), \phi(x), \psi(x)$ may not be analytic. But the functions
\begin{align}
&g_1(x)\,:=\, \Tr(A(x)^*A(x))\,=\,|c(x)|^2  \\
&g_2(x)\,:=\,\Tr(A_2(x)^* A_2(x))\,=\, |c(x)c(f(x))|^2\,|\psi^*(f(x)) \phi(x)|^2
\end{align}
are always analytic. We assume again that $A(x)$ is not identically zero, in which case $g_1(x)$ is not identically zero.
Then, similar to \eqref{eq-L_1-rank1} we find
$$
-\infty\,=\,L_1(f,A)\,=\,\frac12\,\int_\XX\, \ln(g_2(x))\,-\,\ln(g_1(x))\,d\mu(x)\;.
$$
Using Lemma~\ref{lem-f-zero} we find that $g_2(x)=0$ for all $x$, but this is equivalent to $A_2(x)=\nul$ for all $x\in\XX$.
\end{proof}

\section{General rank case \label{sec:rank-r}}

We start with the following simple observation:
\begin{lemma}\label{lem-rank}
Assume $(f,A)$ is an analytic cocycle over a compact, connected  measure space $(\XX,\mu)$. 
There is $r$ such that for all $x$ except a union of sub-manifolds of zero measure (w.r.t. $\mu$), $\rank (A(x))=r$ and $\rank(A(x))\leq r$ for all $x$.
\end{lemma}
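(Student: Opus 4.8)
The plan is to detect the rank of $A(x)$ through the vanishing of its minors and then invoke the standard fact that the zero set of a non-identically-vanishing real analytic function on a connected analytic manifold is negligible.

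First I would set $r:=\max_{x\in\XX}\rank A(x)$. Since $\rank A(x)$ takes values in the finite set $\{0,1,\dots,d\}$ and $\XX\neq\emptyset$, this maximum is attained, say at $x_0$, and by construction $\rank A(x)\le r$ for every $x\in\XX$; this is already the second assertion. Next, recall that for a matrix $M$ one has $\rank M\ge r$ precisely when some $r\times r$ minor of $M$ is non-zero. Let $m_1,\dots,m_N$, $N=\binom{d}{r}^2$, be the $r\times r$ minors of $A(\cdot)$; each $m_j$ is a fixed polynomial in the entries of $A$, so by (A3) (composing with analytic charts) $m_j\in C^\omega(\XX,\CC)$, and hence $\re m_j,\im m_j\in C^\omega(\XX,\RR)$. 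Writing
$$
S:=\{x\in\XX:\rank A(x)<r\}=\bigcap_{j=1}^{N}\{x\in\XX:m_j(x)=0\},
$$
we see that $S$ is a real analytic subset of $\XX$. Since $\rank A(x_0)=r$, there is an index $j_0$ with $m_{j_0}(x_0)\ne0$, so $m_{j_0}$ is not the zero function on the connected manifold $\XX$; consequently $S\subseteq Z:=m_{j_0}^{-1}(0)$, a proper analytic subset of $\XX$.

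Finally I would use the local structure of real analytic sets: a proper analytic subset of a connected real analytic manifold is a locally finite union of embedded analytic submanifolds of dimension strictly less than $\dim\XX$; in particular, read in any analytic chart $\varphi:\Oo\to U\subset\RR^\ell$ it is contained in the zero set of a non-zero real analytic function on $U$, which has zero Lebesgue measure. By (A2) the push-forward $\mu\circ\varphi^{-1}$ has a continuous density with respect to Lebesgue measure on $U$, so such a set is $\mu$-null; covering $\XX$ by finitely many charts gives $\mu(Z)=0$, hence $\mu(S)=0$, and $S$ is itself (being analytic and proper) a union of submanifolds of dimension $<\dim\XX$, each of measure zero. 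This proves the lemma.

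The only point requiring a little care is the last step — passing from ``$m_{j_0}$ not identically zero'' to ``$Z$ is a $\mu$-null, stratified set''. But this is exactly the elementary fact about real analytic functions used repeatedly elsewhere in the paper (e.g.\ to conclude $f\equiv0$ from $\int_\XX\ln|f|\,d\mu=-\infty$, cf.\ Lemma~\ref{lem-f-zero}), so no genuinely new tool is needed; the remaining verifications (analyticity of the minors, the rank–minor characterization, finiteness of an atlas by compactness) are routine.
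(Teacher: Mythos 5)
Your proof is correct and follows essentially the same route as the paper: the paper phrases the rank condition as $\Lambda^r A(x)=\nul$ rather than the simultaneous vanishing of all $r\times r$ minors, but the entries of $\Lambda^r A(x)$ \emph{are} those minors, so the two formulations are identical, and both arguments then reduce to the fact (Corollary~\ref{cor-f-zero}) that the zero set of a non-identically-vanishing analytic function on a connected manifold has $\mu$-measure zero. The only cosmetic difference is that you single out one particular non-vanishing minor $m_{j_0}$, whereas the paper treats $\Lambda^r A$ as a single matrix-valued analytic function.
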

\begin{proof} 
Let $r=\max_{x} \rank A(x)$, such that $\rank A(x)\leq r$ for all $x$. Then, $\Lambda^r A(x) \neq \nul$ for some $x$ and the equation $\rank A(x)<r$ is equivalent to $\Lambda^r A(x)=\nul$. 
By analyticity and connectedness of $\XX$, in any chart for $\XX$, the equation $\Lambda^r A(x) = \nul$ defines a union of sub-manifolds of zero Lebesgue measure within the chart (see also Corollary~\ref{cor-f-zero}). 
Using a finite atlas for $\XX$ and Assumption (A2) in Definition~\ref{def:1} gives the claim.
\end{proof}
\noindent Note that in the one-dimensional case $\XX=\RR/\ZZ$, this zero-measure set simply consists of finitely many points.

\vspace{.2cm}

Another special point of analytic cocycles is the fact that the rank reduction has to take place in each step:

\begin{lemma}\label{lem-rank-reduction}
Let $(f,A)$ denote an analytic cocycle over a compact, connected measure space $(\XX,\mu)$ such that for some $m>0$ and all $x\in\XX$ we have $\rank(A_n(x))\leq r$ and   $\rank(A_{n+m}(x))<r$. 
Then $\rank(A_{n+1}(x))<r$ for all $x\in\XX$. 
\end{lemma}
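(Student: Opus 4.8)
The plan is to argue directly by contradiction, exploiting that the only way $\rank A_{n+1}(x)$ can fail to be $<r$ is to equal $r$. Indeed, from $A_{n+1}(x)=A(f^{n}(x))\,A_{n}(x)$ we get $\rank A_{n+1}(x)\le\rank A_{n}(x)\le r$ for every $x$, so it suffices to rule out $\rank A_{n+1}(x_{0})=r$ for even a single point $x_{0}$. So suppose such an $x_{0}$ exists; then $\Lambda^{r}A_{n+1}(x_{0})\neq\nul$, and since $\Lambda^{r}A_{n+1}$ is an analytic function on $\XX$ which does not vanish identically, Corollary~\ref{cor-f-zero} together with Assumption (A2) of Definition~\ref{def:1} (exactly as in the proof of Lemma~\ref{lem-rank}) shows that
\[
E:=\{x\in\XX : \rank A_{n+1}(x)=r\}
\]
has full $\mu$-measure.

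The heart of the argument will be the claim, proved by induction on $j\ge 1$, that
\[
\bigcap_{i=0}^{j-1}f^{-i}(E)\subseteq\{x\in\XX : \rank A_{n+j}(x)=r\}\,.
\]
The case $j=1$ is just the definition of $E$. For the inductive step I would fix $x\in\bigcap_{i=0}^{j}f^{-i}(E)$, so that both $x$ and $f(x)$ lie in $\bigcap_{i=0}^{j-1}f^{-i}(E)$, whence $\rank A_{n+j}(x)=\rank A_{n+j}(f(x))=r$ by the inductive hypothesis. Then I would play the two factorizations
\[
A_{n+j}(f(x))=A(f^{n+j}(x))\,A_{n+j-1}(f(x))\,,\qquad A_{n+j}(x)=A_{n+j-1}(f(x))\,A(x)
\]
against each other. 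From the first, using $\rank A_{n+j-1}(f(x))\le r$ (true for any argument, since $A_{n}$ is a right factor of $A_{n+j-1}$) and $\rank A_{n+j}(f(x))=r$, one gets $\rank A_{n+j-1}(f(x))=r$ and that $A(f^{n+j}(x))$ is injective on the $r$-dimensional subspace $\ran A_{n+j-1}(f(x))$. From the second, $\ran A_{n+j}(x)\subseteq\ran A_{n+j-1}(f(x))$, and since $\dim\ran A_{n+j}(x)=r$ by hypothesis these subspaces coincide. Hence $A(f^{n+j}(x))$ is injective on $\ran A_{n+j}(x)$, and therefore
\[
\rank A_{n+j+1}(x)=\rank\bigl(A(f^{n+j}(x))\,A_{n+j}(x)\bigr)=\dim\ran A_{n+j}(x)=r\,,
\]
completing the induction.

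With the claim in hand, taking $j=m$ gives $\bigcap_{i=0}^{m-1}f^{-i}(E)\subseteq\{x\in\XX : \rank A_{n+m}(x)=r\}$, and the latter set is empty by hypothesis. On the other hand $f$ is measure preserving, so $\mu(\XX\setminus f^{-i}(E))=\mu(f^{-i}(\XX\setminus E))=\mu(\XX\setminus E)=0$ for every $i$, and hence $\bigcap_{i=0}^{m-1}f^{-i}(E)$ has full $\mu$-measure; in particular it is nonempty. This contradiction shows that no such $x_{0}$ exists, i.e.\ $\rank A_{n+1}(x)<r$ for all $x\in\XX$.

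The step I expect to be the main obstacle is the range bookkeeping in the inductive step --- pinning down that $\ran A_{n+j}(x)=\ran A_{n+j-1}(f(x))$ whenever the relevant ranks are maximal, which is exactly what lets the non-collapsing information extracted one iterate further along the orbit be pushed back into the rank of $A_{n+j+1}(x)$. Once that identity is isolated the rest is elementary dimension counting, and the only place analyticity enters is the single passage above from ``$\rank A_{n+1}=r$ at one point'' to ``$\rank A_{n+1}=r$ almost everywhere''.
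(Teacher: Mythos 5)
Your proof is correct, and it takes a genuinely (if modestly) different route from the paper's. Both arguments hinge on the same two ingredients — analyticity via Corollary~\ref{cor-f-zero} to upgrade rank information from ``somewhere'' or ``positive measure'' to ``full measure,'' and the observation that when $\rank A_{n+j}(x)=r$ is maximal the ranges $\ran A_{n+j}(x)$ and $\ran A_{n+j-1}(f(x))$ must coincide — but they are organized in opposite directions. The paper runs a \emph{backward} induction on $m$: it shows that the hypothesis ``$\rank A_{n+m}<r$ everywhere'' forces ``$\rank A_{n+m-1}<r$ everywhere,'' by proving the bad set $\Bb=\{\rank A_{n+m-1}<r\}$ satisfies $\Bb\cup f^{-1}(\Bb)=\XX$, hence has positive measure, hence (by Corollary~\ref{cor-f-zero}) equals $\XX$. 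You instead argue by contradiction and run a \emph{forward} induction: if $\rank A_{n+1}$ equals $r$ somewhere, analyticity makes the good set $E$ full measure; then the range identity lets you propagate $\rank A_{n+j}=r$ forward along orbits, so at $j=m$ a full-measure (hence nonempty) set lands in $\{\rank A_{n+m}=r\}$, which the hypothesis makes empty. Your version applies Corollary~\ref{cor-f-zero} once up front rather than at each backward step, which arguably makes the logic a bit more transparent; the paper's version avoids the explicit contradiction and produces the intermediate statement $\rank A_{n+m-1}<r$ everywhere, which it reuses implicitly in Theorem~\ref{main-1}. Both are sound and comparable in length.
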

\begin{proof}
We claim if $m>1$, then $\rank(A_{n+m-1}(x))<r$ for all $x$. The result then follows by backward induction.
We let
$$\Bb:=\{x:\rank(A_{n+m-1}(x))<r\}\,=\,\{x:\Lambda^r A_{n+m-1}(x)=\nul \}\,.$$
Take some $x\not \in \Bb$, then 
\begin{equation*}
	\rank(A_{n+m}(x))< r\,,\quad
\rank(A_{n+m-1}(x))= r\,.
\end{equation*}
As $A_{n+m}(x)=A(f^{n+m-1}(x))\,A_{n+m-1}(x)$ this means $\ran A_{n+m-1}(x)\cap \ker A(f^{n+m-1}(x))\neq \emptyset$.
Since $A_{n+m-1}(x)=A_{n+m-2}(f(x)) A(x)$ and $m>1$ we find $r\geq\rank A_{n+m-2}(f(x))\geq r$ and hence
$$\ran(A_{n+m-2}(f(x))=\ran(A_{n+m-1}(x))\,.$$ 
Therefore, $$\ran A_{n+m-2}(f(x))\cap \ker A(f^{n+m-1}(x))\neq \emptyset$$
implying  $$\rank(\,A_{n+m-1}(f(x))\,)<r\qtx{which means} f(x) \in \Bb $$
In summary, we prove that for all $x\in \XX$, either $x\in \Bb$ or $f(x)\in \Bb$, i.e. $\Bb\cup f^{-1}(\Bb)=\XX$. 
This implies $\mu(\Bb)=\mu(f^{-1}(\Bb))>0$ and by Corollary~\ref{cor-f-zero}, $\Lambda^r A=\nul$ for all $x$ and hence $\Bb=\XX$.
\end{proof}

Now we can prove Theorem~\ref{main-1} and Theorem~\ref{main-1a}.

\begin{proof}[Proof of Theorem~\ref{main-1}]
Let $(f,A)$ denote an analytic cocycle over a compact, connected measure space $(\XX,\mu)$ and $L_1(f,A)=-\infty$.
By Lemma~\ref{lem-rank} we know for any $n\in \NN$, there is $r_n$ such that for all $x$ except a $\mu$-zero-measure set $\rank(A_n(x))=r_n$ and $\rank(A_n(x))\leq r_n$ for all $x$. Then we have $r_{n-1}\geq r_n$ for all $n$. 
Let $\tilde r=\min_{n\in\NN}r_n$, to prove the Lemma we need to establish $\tilde r=0$. Suppose $\tilde r=r_n>0$. Therefore, $\Lambda^{r_n} A_n(x)$ has maximal rank $1$ and 
$$L_1(f^n, \Lambda^{r_n}(A_n))=n\,\sum_{i=1}^{r_n} L_i(f,A)=-\infty$$
By Proposition~\ref{prop-rank-1} we have $\Lambda^{r_n} A_{2n}(x)=\nul$ for all $x$. As a result, $r_{2n}=\rank(A_{2n}(x))<r_n$ which contradicts with our assumption of $r_n=\tilde r$.
Iterating Lemma~\ref{lem-rank-reduction} gives $A_{r+1}(x)=\nul$ for all $x\in\XX$ with $r=r_1$.
\end{proof}

\begin{proof}[Proof of Theorem~\ref{main-1a}]
Now let $(\alpha,A)$ be an analytic one-frequency cocycle and $L_1(\alpha,A)=-\infty$. By the proof above we know that $(\alpha,A)$ is nilpotent.
Let $p$ be the nilpotency degree. As a corollary of the lemma above we get that $\ker A_n(x)$ is strictly increasing,
$\ker A_{n-1}(x) \subsetneqq \ker A_n(x)$ for $n=1,\ldots p$ and almost all $x$, hence $\rank A_n(x)=r_n$ and the kernels have dimensions $d-r_n$.
Note that by Lemma~\ref{lem-an-sub} the subspaces $(\ker A_{n-1}(x))^\perp \cap \ker A_n(x)$ induce an analytic function 
from $\RR/\ZZ$ to $G(r_{n-1}-r_n,d)$.
Using Lemma~\ref{lem-lift}~(ii) this means
that we find analytic dependent matrices $M_n(x)\in \CC^{d\times(r_{n-1}-r_n)}$, $n=1,\ldots,p$ such that:\\
(i) $M_n(x)^* M_n(x)=\one$ for all $x$\\
(ii) $\ran M_n(x)$ is orthogonal to the kernel of $A_{n-1}(x)$ for almost all\footnote{Here, almost all means all but finitely many} $x$\\
(iii) the range of $M_n(x)$ and the kernel of $A_{n-1}(x)$ span the kernel of $A_n(x)$, for almost all $x$.\\
Here, $A_0(x)=\one$ and so $M_1$ actually spans the kernel of $A(x)$.

\vspace{.2cm}

As $r_n=0$ we get $\ker A_n = \CC^d$ and hence $U(x)= (M_1(x),\ldots,M_k(x))$ defines an analytic unitary matrix.
As $\ker A_n(x)=\bigoplus_{i=1}^n \ran M_i$ for $n=1,\ldots,p$ and almost all $x$ we obtain that $B(x):=U^*(x+\alpha) A(x) U(x)$ is of the claimed form \eqref{eq-main}, at first, for almost all $x\in \RR/\ZZ$, but by analyticity for all $x$. This shows part (i).

\vspace{.2cm}

Now let us get to the completely reduced Jordan form, part (ii). 

We assume that $\rank A_n(x)=r_n$ is constant for all $x$. 
Recall that the nilpotency degree of $A$ was denoted by $p$. By Lemma~\ref{lem-an-sub}
the subspaces $\VV_n$ 
$$
\VV_n(x)\,:=\,\ran(A_{p-n}(x-(p-n)\alpha))\,=\, \ran (A(x-\alpha)\cdots A(x-(p-n)\alpha)\;,
$$
of fixed dimensions $r_{p-n}$, $n=1,\ldots,p$ are analytically dependent on $x$, where we set $\VV_p=\CC^d$.
Clearly, $\VV_n(x) \subset \VV_{n+1}(x)$ and $A(x) \VV_{n}(x) = \VV_{n-1}(x+\alpha) \subset \VV_n(x+\alpha)$.
Choosing some analytically dependent basis of $\VV_{n}(x)$ it is clear that $A|_{\VV_n}(x)$ defined as $A(x)$ mapping from $\VV_{n}(x)$ to $\VV_{n}(x+\alpha)$ is analytic and by assumption of constant rank $r_{p-n}$.
Thus, by Lemma~\ref{lem-an-sub}, the subspaces $\ker A|_{\VV_n}(x) \subset \VV_{n}(x)$ and their orthogonal complements within $\VV_n(x)$, $(\ker A|_{\VV_n}(x))^\perp$ depend analytically on $x$.
Then, by constancy of the rank, the restriction of the map $A(x)$ (or $A|_{\VV_n}(x)$) from  $(\ker A|_{\VV_n}(x))^\perp$ to $\VV_{n-1}(x+\alpha)$ is analytic and invertible for all $x$.
Taking the inverse, we get some analytic function $\hat A_{\VV_{n-1}}(x)$ such that
\begin{equation}\label{eq-def-hatA}
\hat A_{\VV_{n-1}}(x)\,:\,\VV_{n-1}(x+\alpha)\,\to\,\VV_n(x),\quad   A(x)\,\hat A_{\VV_{n-1}}(x) v\,=\,v\qtx{for}  v\in \VV_{n-1}(x+\alpha)\;.
\end{equation}

We claim that for any $1\leq n\leq p$, there exists analytic maps $v_{i,j}:\RR/\ZZ\to \CC^{d-1}\setminus\{0\}$, $1\leq i\leq l,  1\leq j\leq d_i, $ such that 
\begin{align}
& A(x)\, v_{i,j}(x) \,=\,  v_{i,j+1}(x+\alpha),\text{ where } v_{i,d_i+1}:=0 \label{jor claim eq1}\\
& \{v_{i,j}(x)\}_{1\leq i \leq l, 1\leq j\leq d_i} \quad \text{is a linear independent family for {\bf all} $x$} \label{jor claim eq2}\\
& \VV_n(x)\,=\, \label{jor claim eq3}\;{\rm span}\{v_{i,j}(x)\}_{1\leq i \leq l, 1\leq j\leq d_i}\quad\text{for {\bf all} $x$}\;.
\end{align}
The values $l$ and $d_i$ depend on $n$. Notice that to prove the existence of a Jordan form we only need to prove the claim for the case $n=p$.

\vspace{.2cm}

We prove the claim by induction: when $n=1$, $\VV_1(x)\subset \ker(A(x))$.
By Lemma \ref{lem-lift} and Appendix of \cite{AJS}, there are analytic maps $v_{i,1}:\RR/\ZZ\to \CC^{d-1}$ such that 
for any $x$, $(v_{i,1}(x))_{i=1}^\ell$ is a basis of $\VV_1(x)$
which proves the claim for the case $n=1$.

Suppose the claim holds for $n-1<p$, i.e. there are analytic maps $v_{i,j}$ satisfying \eqref{jor claim eq1}, \eqref{jor claim eq2}, \eqref{jor claim eq3}. 
Then $v_{i,1}(x+\alpha) \in \VV_{n-1}(x+\alpha)$  and using the analytic dependent maps $\hat A_{\VV_{n-1}}(x)$ as in \eqref{eq-def-hatA} we can define the analytic vectors
$$
v_{i,0}(x)\,:=\,\hat A_{\VV_{n-1}}(x)\,v_{i,1}(x+\alpha)\,\in\,\VV_n(x)\;.
$$
By construction, $\VV_n(x)\subset A(x)^{-1} \VV_{n-1}(x+\alpha)$ where the inverse denotes the pre-image.
By assumption, the latter pre-image is spanned by $\VV_{n-1}(x)$ and $A(x)^{-1}(v_{i,1}(x+\alpha))= v_{i,0}(x)+\,\ker A(x)$.
Hence, $\VV_n(x)$ is spanned by $\VV_{n-1}(x)$, the vectors $v_{i,0}(x)$ and some vectors in $\VV_n(x) \cap \ker A(x)$.

Now, by constancy of $\rank A|_{\VV_n}(x) = \rank A_{p+1-n}(x-(p-n)\alpha)$ we get that the dimensions of $\ker A(x) \cap \VV_n(x) = \ker A|_{\VV_n}(x)$ are constant.
Hence, the orthogonal complement $\WW_n(x)$  of $\ker A(x) \cap \VV_{n-1}(x)$ 
within $\ker A(x) \cap \VV_n(x)$ has constant dimension and is an analytically dependent subspace.
Using Lemma~\ref{lem-lift}~(ii) we find analytic functions $v_{i,1}(x),\,l<i\leq l'$ such that for all $x$,
$$ 
\{v_{i,1}(x) \}_{l < i\leq l'} \qtx{is a basis of} \WW_n(x)\,. 
$$
Moreover, by the considerations above, $\VV_n(x)$ is spanned by $\VV_{n-1}(x)$, $ \{v_{i,0}(x)\}_{i=1}^l$ and $\WW_n(x)$
for all $x\in\RR/\ZZ$.
We claim that $\VV_{n-1}(x)$, $\{v_{i,0}(x)\}_{1\leq i \leq l}$ and $\{v_{i,1}(x)\}_{l<i\leq l'}$ are linear independent.
Assume 
$$
v(x)=\sum_{i\leq l} a_i v_{i,0}(x)+\sum_{i>l} a_i v_{i,1}(x) \in \VV_{n-1}(x)\;.
$$ 
Then apply $A(x)$ to get $\sum_{i\leq l} a_i v_{i,1}(x+\alpha) \in A(x)\VV_{n-1}(x)$ which by induction assumption is spanned by $v_{i,j}(x+\alpha)$ with $j\geq 2$.
Hence, $a_i=0$ for all $i\leq l$ by linear independence of $(v_{i,j}(x+\alpha))_{i\leq l,j\geq 1}$.
Thus, $\sum_{i>l} a_i v_{i,1}(x) \in \VV_{n-1}(x)$. By construction, the space $\WW_n(x)$ is transversal to $\VV_{n-1}(x)$ and $\{v_{i,1}(x)\}_{i>l}$ is a basis of $\WW_n(x)$. Hence, $a_i=0$ also for $i>l$, showing the linear independence.

In summary, let 
$$d_i':=\begin{cases}
d_i+1 \text{ if }i\leq l\\ 
1 \text{ if } l<i\leq l'
\end{cases}$$ and $$u_{i,j}(x):= \begin{cases}
v_{i,j-1}(x) \text{ if }i\leq l\\
 v_{i,1}(x) \text{ if }l<i\leq l'
\end{cases}$$
Then $\{u_{i,j}(x)\}_{1\leq i\leq l', 1\leq j\leq d'_i}$ satisfy \eqref{jor claim eq1}, \eqref{jor claim eq2}, \eqref{jor claim eq3} for $n$. 
By induction the claim holds for all $1\leq n\leq p$. 
\end{proof}

\section{Non-nilpotent case \label{sec:non-nilpotent}}

In this section we prove Theorem~\ref{main-2}.
Let us now assume that $L_k(\alpha,A)>-\infty$ and $L_{k+1}(\alpha,A)=-\infty$, $1\leq k <d$.
Let $r_n$ be the maximal rank of $A_n(x)$ for $x\in \RR/\ZZ$, as in Lemma~\ref{lem-rank-reduction}.
As $L_1(\alpha,\Lambda^k A)>-\infty$, we have $\min r_n\geq k$.
Since $L_1(\alpha,\Lambda^{k+1} A)=-\infty$ we know by Theorem~\ref{main-1} that $\Lambda^{k+1} A$ is nilpotent. Hence, for some $n$, $\rank A_n(x)\leq k$ for all $x$ and $\min_n r_n=k$.
By Lemma~\ref{lem-rank-reduction} the rank reduces at every step.

Now, let $p$ be the minimal natural number such that $r_p=k$. 
Then, using Lemma~\ref{lem-an-sub}, $\ker A_p(x)$ induces a $d-k$ dimensional, analytically dependent, invariant subspace.
Let $M_1(x)\in \CC^{d\times{d-k}}$ be an analytic partial isometry such that the column vectors span $\ker A_p(x)$ (almost surely), constructed by Lemma~\ref{lem-lift}.

Again, by Lemma~\ref{lem-an-sub} the orthogonal complement $(\ker A_{p}(x))^\perp$ induces a $k$-dimensional analytically dependent subspace and by Lemma~\ref{lem-lift}~(ii)
we can construct an analytic partial isometry $M_{2}(x)\in \CC^{d\times k}$ where the column vectors span this space.
Then, $U(x)=(M_1(x), M_2(x))$ is by construction an analytically dependent, unitary matrix and we get the desired form
$$
B(x)\,:=\,U^*(x+\alpha)\, A(x)\,U(x)\,=\,\pmat{\bfa(x) & \bfb(x) \\ \nul & \bfd(x)}
$$
where $(\alpha,\bfa)$ is a nilpotent cocycle, $\bfa_p=\nul$, and $\bfd(x)$ is almost surely invertible. 

The fact that this cocycle is $k$-dominated if and only if $\det \bfd(x)\neq 0$ for all $x$ follows directly from the theory in \cite{AJS}.
But it can also be seen more directly. Clearly, if $(\alpha,A)$ and hence also $(\alpha,B)$ is $k$-dominated then using $\bfa_{d-k}=\nul$ and $L_k(\alpha,A)>-\infty$ one must have 
that $\bfd_{d-k}$ is invertible for all $x$ which also implies  $\rank A_{d-k}(x)=\rank B_{d-k}(x)=k$ for all $x$.

Let us now assume $\bfd(x)$ is invertible for all $x$ and construct the dominated splitting. 
We will consider an iteration of dynamical conjugations by $\Mm_n(x)=\smat{\one & M_n(x) \\ \nul & \one}$ which are inductively defined.
Let $C^{(0)}(x)=B(x)$, $\bfc^{(0)}(x)=\bfb(x)$ and define inductively $M_{n+1}(x)=\bfc^{(n)}(x-\alpha)\bfd^{-1}(x-\alpha)$
and $\bfc^{(n+1)}(x)=\bfa(x) M_{n+1}(x)=\bfa(x)\bfc^{(n)}(x-\alpha)\bfd^{-1}(x-\alpha)$.
Then, induction yields
$$
C^{(n)}(x)\,:=\,\Mm_n^{-1}(x+\alpha)\,C^{(n-1)}(x)\,\Mm_n(x)\,=\, \pmat{\bfa(x) & \bfc^{(n)}(x) \\ \nul & \bfd(x)}{}
$$
Note that $\bfc^{(n)}(x)=
\bfa_n(x-(n-1)\alpha)\bfp(x)$ for some matrix $\bfp(x)$. As $(\alpha,\bfa)$ is nilpotent, $\bfa_p=\nul$, this means that $\bfc^{(p)}(x)=\nul$.
Taking $M(x)=\sum_{n=1}^p M_n(x)$ we get
$$
C^{(p)}(x)\,=\,\pmat{\one & M(x+\alpha) \\ \nul & \one}^{-1} \,B(x)\,\pmat{\one & M(x+\alpha) \\ \nul & \one}\,=\,\pmat{\bfa(x) \\ & \bfd(x)}\;.
$$
It is clear that this dynamical conjugation corresponds to a $k$-dominated splitting (as $\bfd(x)$ is always invertible and $\bfa(x)$ nilpotent).
This finishes the proof of Theorem~\ref{main-2}.
\hfill $\Box$

\appendix

\section{Some Lemmata}

\subsection{Lifting lemma and analytic subspaces}

We denote the set of $k$ dimensional subspaces of $\CC^d$ by $G(k,d)$. $G(k,d)$ is a compact holomorphic manifold, $G(1,d)$ is equal to the complex projective space $P\CC^d$.

\begin{lemma} \label{lem-lift}  
We have the following:
\begin{enumerate}[{\rm(i)}]
 \item Every non-zero one-periodic real analytic function $\varphi: \RR/\ZZ \to \CC^d$, induces a real analytic function $\Phi\,:\RR/\ZZ\,\to\, P\CC^d$ to the projective space, such that\footnote{elements in projective space are considered as 1-dimensional subspaces of $\CC^d$}
 $\varphi(x)\in \Phi(x)$. \\
 Every analytic function $\Phi\,:\,\RR/\ZZ\,\to\,P\CC^d$ can be lifted to a one-periodic analytic function $\phi:\RR/\ZZ \to \Sb_\CC^{d-1}$, the set of unit vectors in $\CC^d$, i.e. $\phi(x)\in \Phi(x)$.
 \item Every real analytic function $M: \RR/\ZZ \to \CC^{d \times k}$ with $\sup_x \rank M(x)=k$ induces a real analytic function $\MM: \RR/\ZZ \to G(k,d)$ such that $\ran M(x)\subset\MM(x)$.\\
 Every real analytic function $\MM: \RR/\ZZ \to G(k,d)$ can be lifted to a one-periodic analytic function $\Mm: \RR/\ZZ \to \CC^{d\times k}$ with $\Mm(x)^* \Mm(x)\,=\,\one_k$, i.e. the column vectors of $\Mm(x)$ form an analytically dependent orthonormal basis of $\MM(x)$. 
 
 \end{enumerate}
\end{lemma}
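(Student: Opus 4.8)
The plan is to treat the four assertions as two \emph{range} statements (an analytic family of vectors, resp.\ matrices, spans an analytic family of subspaces) and two \emph{lifting} statements (an analytic family of subspaces admits an analytic orthonormal frame), and to dispatch both lifting statements by one ODE argument (Kato's transport/adiabatic equation).

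\emph{Range statements.} For part~(i), first half: since $\varphi$ is real analytic and $\not\equiv0$ it extends holomorphically to a neighbourhood of $\RR/\ZZ$, so its zero set $Z$ is finite. Off $Z$ the line $\CC\varphi(x)$ depends analytically on $x$ (the map $\CC^d\setminus\{0\}\to P\CC^d$ is holomorphic). Near $x_0\in Z$ I would write $\varphi(z)=(z-x_0)^m\varphi_0(z)$ with $\varphi_0$ holomorphic, $\varphi_0(x_0)\neq0$ and $m$ the least vanishing order among the components; since $(x-x_0)^m$ is a nonzero real scalar for real $x\neq x_0$, one has $\CC\varphi(x)=\CC\varphi_0(x)$ there, so $\Phi(x):=\CC\varphi_0(x)$ extends $\Phi$ real-analytically across $x_0$, and one does this at each point of $Z$. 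For part~(ii), first half, I would apply this to $\Lambda^k M(x)=m_1(x)\wedge\dots\wedge m_k(x)\in\Lambda^k\CC^d$ (the $m_j$ being the columns of $M$), which is real analytic and, by $\sup_x\rank M(x)=k$, not identically zero; one gets an analytic $\widetilde\MM:\RR/\ZZ\to P(\Lambda^k\CC^d)$ agreeing with $[\Lambda^k M(x)]$ off a finite set. Those generic values lie on the closed image of the Pl\"ucker embedding $G(k,d)\hookrightarrow P(\Lambda^k\CC^d)$, hence so do all values of $\widetilde\MM$ by continuity and density; composing with the inverse of the (biholomorphic) Pl\"ucker embedding gives an analytic $\MM:\RR/\ZZ\to G(k,d)$ with $\MM(x)=\ran M(x)$ generically, and $\ran M(x)\subseteq\MM(x)$ for all $x$ by closedness of the incidence relation $\{(v,W):v\in W\}$.

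\emph{Lifting statements.} Let $\Pi(x)$ be the orthogonal projection onto the prescribed subspace (a line, resp.\ a $k$-plane); $\Pi$ is real analytic because $W\mapsto\Pi_W$ is. On the universal cover $\RR$ I would solve the linear ODE $\Mm'(x)=[\dot\Pi(x),\Pi(x)]\,\Mm(x)$ with $\Mm(0)$ an isometry onto the fibre over $0$. The coefficient $A:=\dot\Pi\Pi-\Pi\dot\Pi$ is real analytic and skew-Hermitian, so $\Mm$ is real analytic and $\Mm(x)^*\Mm(x)\equiv\one_k$; and, using $\Pi\dot\Pi\Pi=0$ (obtained by differentiating $\Pi^2=\Pi$), the function $g=(I-\Pi)\Mm$ satisfies $g'=-\dot\Pi\,g$ with $g(0)=0$, so $g\equiv0$ and $\ran\Mm(x)$ is the prescribed fibre for every $x$. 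Since $\Pi$ is $1$-periodic, $\Mm(\cdot+1)$ solves the same ODE and has the same range-fibres, whence $\Mm(x+1)=\Mm(x)Q(x)$ with $Q(x)\in\Un(k)$ real analytic; differentiating this relation and using that $\Mm(x)$ has full column rank gives $Q'\equiv0$, so $Q\equiv Q_0$ is constant. Finally I would write $Q_0=\exp(iH_0)$ with $H_0$ Hermitian and set $R(x)=\exp(-ixH_0)\in\Un(k)$: then $\widehat\Mm(x):=\Mm(x)R(x)$ is real analytic, orthonormal, has $\ran\widehat\Mm(x)=\ran\Mm(x)$, and is $1$-periodic since $\widehat\Mm(x+1)=\Mm(x)Q_0Q_0^{-1}R(x)=\widehat\Mm(x)$, so it descends to $\RR/\ZZ$; the case $k=1$ is exactly part~(i), second half (with $Q_0=e^{i\theta_0}$ and $R(x)=e^{-i\theta_0x}$).

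The main obstacle is the pair of requirements in the lifting statements: \emph{analyticity} of the frame (which the analytic transport ODE delivers for free) together with \emph{one-periodicity} (for which one must show the monodromy of the transported frame is a constant unitary and then untwist it by a matrix exponential — the circle, rather than an interval, being what precludes an ad hoc global choice). By comparison, the two range statements are routine once part~(i), first half, is in hand.
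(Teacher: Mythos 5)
Your handling of the two range statements matches the paper's: you factor out the vanishing order at the finitely many zeros to continue the projectivized map analytically across them, and for the Grassmannian case you pass through the Pl\"ucker embedding $G(k,d)\hookrightarrow P(\Lambda^k\CC^d)$ and invoke closedness of its image, exactly as in the paper. The lifting statements, however, you treat by a genuinely different method. The paper simply cites the factorization result of \cite[Theorem~A.1~(vi)]{AJS} to produce a non-vanishing (resp.\ full-rank) analytic lift and then applies Gram--Schmidt; you instead give a self-contained argument via Kato's parallel-transport ODE $\Mm'=[\dot\Pi,\Pi]\,\Mm$ on the universal cover $\RR$: the skew-Hermitian generator keeps the frame isometric, the identity $\Pi\dot\Pi\Pi=0$ (from differentiating $\Pi^2=\Pi$) keeps $\ran\Mm(x)=\ran\Pi(x)$, the monodromy $Q(x)=\Mm(x)^*\Mm(x+1)$ is shown to be a constant unitary $Q_0$ by differentiating $\Mm(x+1)=\Mm(x)Q(x)$ and using full column rank of $\Mm$, and then $Q_0$ is untwisted by $R(x)=\exp(-ixH_0)$ with $Q_0=\exp(iH_0)$. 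This is correct. What it buys over the paper's route is self-containment and a construction of the \emph{orthonormal} frame in one step rather than lift-then-Gram--Schmidt; what it makes especially visible is that the one-dimensionality of the base enters only in untwisting a single monodromy by a logarithm in $\Un(k)$ — consistent with the paper's remark that the failure of an analogue of this lemma is precisely the obstruction for higher-dimensional bases.
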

\begin{proof}
For part (i) if $\varphi(x)\neq 0$ then the equivalence class $[\varphi]_\sim$ in projective space is analytic. 
The problematic points are only the values $x_0$ where $\varphi(x_0)=0$. Around such a point $\varphi(x_0+\varepsilon)=\varepsilon^m \hat\varphi(x_0+\varepsilon)$ where $m\in\NN,\,\hat\varphi(x_0)\neq 0$ and $\hat\varphi$ is analytic.
The equivalence class $[\hat\varphi]_\sim$ gives the analytic extension to get $\Phi(x)\in P\CC^d$.
As shown in \cite[Appendix, Theorem~A.1~(vi)]{AJS} for any such function $\Phi$ there is an analytic lift to a one-periodic, non-zero function, normalizing its norm gives $\phi(x)$.\\
For part (ii) note that $G(k,d)$ is a closed sub-manifold of the projective space $P\Lambda^k \CC^d$ by identifying the subspace spanned by $v_1,\ldots, v_k$ with the vector
$v_1 \wedge v_2\wedge \ldots \wedge v_k$.
Thus let $v_1,\ldots,v_k$ be the column vectors of $M$ and use part (i) and closedness of $G(k,d)\subset P\Lambda^k \CC^d$ to get the one-periodic real analytic function $\MM(x)\in G(k,d)\subset P\Lambda^k \CC^d$. 
Again, following \cite[Theorem~A.1~(vi)]{AJS} we get some lift to a function $\hat \Mm(x)\in \CC^{d\times k}$ which has always full rank. Applying the Gram-Schmidt procedure gives $\Mm(x)$.
\end{proof}
One may note that $\phi(x)=e^{if(x)} \varphi(x)/\|\varphi(x)\|$ for some adequate real valued function $f$ in case (i). The proof uses very much the one-dimensional structure of $\RR/\ZZ$ as well as the complex structure of $\CC^d$ or $G(k,d)$. 
The statements are not valid for a higher dimensional base, e.g. $\RR^\ell/\ZZ^\ell$, or when using real Grassmannian manifolds, like $P\RR^d$ instead of $P\CC^d$.

\vspace{.3cm}

Next we consider analytic dependent subspaces. We say that $\MM(x)$, $x\in\RR/\ZZ$ is an analytic subspace if $\MM \in C^\omega(\RR/\ZZ,G(k,d))$.
We say that a family of subspaces $\VV(x)$ induces an analytic subspace if there exists $k$ and 
$\MM\in C^\omega(\RR/\ZZ,G(k,d))$ such that $\VV(x)=\MM(x)$ for almost all $x$.

\begin{lemma}\label{lem-an-sub}
 Let $A(x)$ be an analytic matrix and $\VV(x)$ and $\WW(x)$ analytic subspaces, i.e. 
 $A\in C^\omega(\RR/\ZZ,\CC^{d\times d})$, $\VV\in C^\omega(\RR/\ZZ,G(k,d))$, $\WW\in C^\omega(\RR/\ZZ,G(k',d))$. Then we have
 \begin{enumerate}[{\rm (i)}]
  \item The image $A(x) \VV(x)$ induces an analytic subspace. If $\rank A(x)\VV(x)$ is constant then it {\bf is} an analytic subspace. 
  Particularly, $\ran A(x)$ induces an analytic subspace.
  \item The orthogonal projections $P(x), Q(x)$ onto $\VV(x)$ and $\VV(x)^\perp$ are analytic. Particularly, $\VV(x)^\perp$ is an analytic subspace.
  \item The pre-image $A(x)^{-1} \VV(x)$ induces an analytic subspace and it is an analytic subspace if it has constant dimension. 
  Particularly, $\ker A(x)$ induces an analytic subspace.
  \item The sum $\VV(x)+\WW(x)$ induces an analytic subspace and it is analytic if it has constant dimension.
  \item The intersection $\VV(x)\cap \WW(x)$ induces an analytic subspace and it is analytic if it has constant dimension.
 \end{enumerate}
\begin{proof}
 We let $V(x)$ and $W(x)$ be analytic $d\times k$ and $d\times k'$ matrices such that the column vectors form an orthonormal basis of $\VV(x)$ and $\WW(x)$, respectively.
 These marices exist by Lemma~\ref{lem-lift}.
 Note that $A(x)V(x)$ is an analytic $d\times k$ matrix, choosing column vectors forming a basis of the range for almost all $x$
 and using Lemma~\ref{lem-lift} shows (i).
 Let $v_i(x)$ be the column vectors of $V(x)$ and $P(x)=\sum v_i(x) v_i^*(x)$ the analytic orthogonal projection onto $\VV(x)$.
 Then $Q(x)=\one-P(x)$ is analytic and so is $\VV(x)^\perp = Q(x) \CC^d$.
 For part (iii) note that $A(x)^{-1} \VV(x)=(A^*(x) \VV(x)^\perp)^\perp$ which combining (i) and (ii) induces an analytic subspace.
 Part (iv) follows from Lemma~\ref{lem-lift}~(ii) applied to a matrix constructed from column vectors of $(V(x),W(x))$ giving a basis of $\VV(x)+\WW(x)$ for almost all $x$. Finally, for part (v) note that $\VV(x)\cap \WW(x) = (\VV(x)^\perp + \WW(x)^\perp)^\perp$ so it follows from (ii) and (iv).
\end{proof}

 \end{lemma}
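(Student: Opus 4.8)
The plan is to reduce all five statements to Lemma~\ref{lem-lift}~(ii) together with two elementary duality identities for subspaces. First I would invoke Lemma~\ref{lem-lift}~(ii) to choose analytic matrices $V(x)\in\CC^{d\times k}$ and $W(x)\in\CC^{d\times k'}$ whose columns form orthonormal bases of $\VV(x)$ and $\WW(x)$ respectively.

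For part (i): $A(x)V(x)$ is an analytic $\CC^{d\times k}$-valued function; setting $r=\max_x\rank\bigl(A(x)V(x)\bigr)$ and applying the first half of Lemma~\ref{lem-lift}~(ii) with target dimension $r$ produces an analytic $\MM\in C^\omega(\RR/\ZZ,G(r,d))$ with $\ran\bigl(A(x)V(x)\bigr)\subset\MM(x)$, with equality off the finite set where the rank is not maximal. Since $\ran\bigl(A(x)V(x)\bigr)=A(x)\VV(x)$, this family induces an analytic subspace, and if $\rank A(x)\VV(x)$ is constant the inclusion is an equality for all $x$, so it is an analytic subspace; the case $\VV\equiv\CC^d$ gives the statement about $\ran A(x)$. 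For part (ii): the orthogonal projection onto $\VV(x)$ is $P(x)=V(x)V(x)^*=\sum_i v_i(x)v_i(x)^*$, which is visibly analytic, hence so is $Q(x)=\one-P(x)$; the latter has constant rank $d-k$, so part (i) applied to $Q(x)$ shows $\VV(x)^\perp=\ran Q(x)$ is an analytic subspace.

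For part (iii) I would use the identity $A(x)^{-1}\VV(x)=\bigl(A(x)^*\,\VV(x)^\perp\bigr)^\perp$, both sides being the set of $v$ with $A(x)v\perp\VV(x)^\perp$ and using that $A(x)^*$ is analytic in $x$: part (ii) makes $\VV(x)^\perp$ analytic, part (i) handles the image under $A(x)^*$, and part (ii) again the final orthogonal complement; the case $\VV\equiv\{0\}$ recovers $\ker A(x)=(\ran A(x)^*)^\perp$. For part (iv), $\VV(x)+\WW(x)$ is the range of the analytic $d\times(k+k')$ matrix obtained by concatenating the columns of $V(x)$ and $W(x)$, so part (i) applies directly; for part (v) one uses $\VV(x)\cap\WW(x)=\bigl(\VV(x)^\perp+\WW(x)^\perp\bigr)^\perp$ and chains parts (ii), (iv), (ii). In each chain the ``constant dimension'' refinement propagates: a constant dimension of the final space forces a constant dimension of every intermediate subspace, which is exactly the hypothesis that upgrades the conclusion of (i) or (iv) from ``induces an analytic subspace'' to ``is an analytic subspace''. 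The only delicate point, and hence the main bookkeeping obstacle, is precisely this tracking of the finite exceptional set where ranks drop --- choosing the right maximal-rank dimension when applying Lemma~\ref{lem-lift}~(ii) and checking that the constant-dimension hypothesis is inherited at every step.
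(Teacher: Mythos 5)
Your proof is correct and follows essentially the same route as the paper: the same use of Lemma~\ref{lem-lift}~(ii) to produce analytic orthonormal bases, the same projection formula $P(x)=\sum_i v_i(x)v_i(x)^*$ for (ii), and the same duality identities $A(x)^{-1}\VV(x)=(A(x)^*\VV(x)^\perp)^\perp$ and $\VV\cap\WW=(\VV^\perp+\WW^\perp)^\perp$ for (iii) and (v). The only small imprecision is in (i), where applying ``the first half of Lemma~\ref{lem-lift}~(ii) with target dimension $r$'' to the $d\times k$ matrix $A(x)V(x)$ really requires first extracting $r$ columns that are generically independent (as the paper does) so that the hypothesis $\sup_x\rank=r$ of the lemma applies to a $d\times r$ matrix; this is exactly the bookkeeping point you flagged at the end.
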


\subsection{Negative infinite log integral}

\begin{lemma} \label{lem-f-zero}
Let $\XX$ be a compact, connected, analytic manifold (over $\RR$) and $\mu$ a probability measure whose push forward has a continuous density with respect to the Lebesgue measure for any analytic chart of $\XX$.
Suppose that $g\in C^\omega(\XX,\CC)$ and $\int_{\XX}\ln|g(x)|d\mu(x)=-\infty$, then $g=0$, i.e. $g(x)=0$ for all $x\in \XX$. 
\end{lemma}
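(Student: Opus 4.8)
The plan is to reduce to the local, Euclidean situation and use the fact that an analytic function that vanishes on a set of positive Lebesgue measure is identically zero on a connected analytic manifold. First I would argue by contradiction: suppose $g$ is not identically zero. By analyticity and connectedness of $\XX$, the zero set $Z=\{x:g(x)=0\}$ is then a union of submanifolds of strictly lower dimension, in particular $\mu(Z)=0$ since the push-forward of $\mu$ in every analytic chart has a continuous density with respect to Lebesgue measure (Assumption (A2)); this is exactly the content of Lemma~\ref{lem-rank}-type reasoning applied to the scalar function $g$. So it suffices to show that $\int_{\XX}\ln|g|\,d\mu>-\infty$, i.e. that $\ln|g|$ is $\mu$-integrable, which contradicts the hypothesis.

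To prove integrability, cover $\XX$ by finitely many analytic charts $\varphi_i:\Oo_i\to U_i\subset\RR^\ell$ (possible by compactness) and take a subordinate partition of unity, so it is enough to bound $\int_{K}\ln_-|g\circ\varphi_i^{-1}(u)|\,h_i(u)\,du$ over a compact set $K\subset U_i$, where $h_i$ is the continuous density and $\ln_-$ the negative part; since $h_i$ is bounded on $K$, it is enough to show that $u\mapsto\ln|\tilde g(u)|$ is locally integrable for a real-analytic $\tilde g:U_i\to\CC$ that is not identically zero (if some $\tilde g$ were identically zero, then $g$ would vanish on a nonempty open set and hence everywhere, contradiction). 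Local integrability of the logarithm of a nonzero real-analytic function is a standard fact: writing $\tilde g=\tilde g_1+i\tilde g_2$ with $\tilde g_j$ real-analytic, $|\tilde g|^2=\tilde g_1^2+\tilde g_2^2$ is real-analytic and not identically zero, and one invokes that for a nonzero real-analytic function the integral of $\ln$ of its absolute value is finite on compact sets — this follows e.g. from resolution of singularities (Hironaka), or more elementarily in one variable from the Weierstrass preparation theorem (reducing to $\ln|u-a|$, which is integrable), and in several variables by induction on dimension after a linear change of coordinates making the function Weierstrass-regular in one variable, or by a direct Łojasiewicz-inequality argument giving $|\tilde g(u)|\geq c\,\dist(u,Z)^{N}$ near $Z$ together with the fact that $\dist(\cdot,Z)^{-\epsilon}$ is locally integrable when $Z$ is a proper analytic subset.

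The main obstacle is precisely this last input — the local integrability of $\ln|\tilde g|$ for a nonzero real-analytic $\tilde g$ in $\RR^\ell$ — which is not completely trivial in dimension $\ell>1$ and requires either citing resolution of singularities / Łojasiewicz inequalities or carrying out an induction via Weierstrass preparation. Once that is in hand, the rest is routine bookkeeping with charts, partitions of unity, and the continuity (hence local boundedness) of the densities $h_i$. I would present the one-variable case in detail (since in the paper's main application $\XX=\RR/\ZZ$) and remark that the general case follows from standard real-analytic geometry.
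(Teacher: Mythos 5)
Your proposal is correct and follows essentially the same strategy as the paper: reduce by compactness to local integrability of $\ln|g|$ near a zero, then invoke the Weierstrass preparation theorem. One small point worth knowing: the multi-variable case is more direct than you anticipate --- no induction on dimension or resolution of singularities is needed. After a linear change of coordinates making $g$ regular in $x_1$, Weierstrass preparation gives $g(x)=W(x_1)\,h(x)$ with $h(\nul)\neq0$ and $W(x_1)=\prod_{i=1}^n(x_1-r_i(x_2,\dots,x_\ell))$ a monic polynomial in $x_1$ whose roots depend on the remaining variables; one then uses that $\int_{-\delta}^{\delta}\ln|x_1-r|\,dx_1$ is bounded below uniformly in $r\in\CC$, and Fubini finishes the estimate in one step without any induction.
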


\begin{proof}
Suppose $g\in C^\omega(\XX,\CC)$ and $g$ is not the zero function. For any $x\in \XX$ such that $g(x)=0$, we claim that there is an open neighborhood $U_x$ such that $\int_{U_x}\ln|g(x)| d\mu(x)>-\infty$. 
Suppose the claim is true, there are finitely many of these open sets $U_i\in \XX$ such that $\bigcup_i U_i\supset \{g(x)=0\}$ (by compactness) 
and for each $i$, $\int_{U_i}\ln|g(x)|\,d\mu(x)>-\infty$. For $x\notin \bigcup_i U_i$ let $|g(x)|>\epsilon$ and $\epsilon<1$, then 
\begin{align*}
&\int_{\XX}\ln|g(x)|d\mu(x)\\
&=\,\int_{\XX\setminus \bigcup_iU_i}\ln|g(x)|\,d\mu(x)\,+\,\int_{\bigcup_iU_i}\ln|g(x)|\,d\mu(x)\\
&\geq\,\ln\epsilon  \,+\,\sum_i \int_{U_i}\ln|g(x)|\,d\mu(x)\,>\,-\infty 
\end{align*}
which contradicts with our assumption.

Now we prove our claim. Let $\dim_\RR \XX=\ell$, $g(x)=0$ and without loss of generality we may use a chart where $x$ is represented by $\nul \in \RR^\ell$.
Using the chart map $\varphi:\XX \to U$ we should technically have $\nul=\varphi(x)$ and work with the functions $g(\varphi^{-1}(x))$ on $U$. But for simplicity we will just write $g(x)$ for $x\in U\subset \RR^\ell$.
Then we have $g(\nul)=0$ and by connectedness of $\XX$, $g$ is not identically zero on this chart. Otherwise, $g$ would be identically zero on $\XX$. 
Moreover, $g(\cdot, 0,\dots,0)$ shall not be the zero function near $0$, otherwise we replace $g$ by $g\circ A$, for $A\in \GL(\ell,\RR)$)\footnote{If for any $A\in \GL(\ell,\RR)$, $g\circ A(\cdot, 0,\dots,0)$ is zero function near $0$, then $g$ must be the zero function.}.
The density of (the push forward by $\varphi$ of) the measure $\mu$ with respect to the Lebesgue measure shall be given by the continuous function $\mu(x)$ near $0$, i.e. $\mu\circ \varphi^{-1}=\mu(x) dx$ represents the measure in the chart.

Then there exist $n\in \NN$ such that for $k<n$, $\frac{\partial^k g}{\partial x_1^k}(0,\dots,0)=0$ and $\frac{\partial^n g}{\partial x_1^n}(0,\dots,0)\neq 0$. By the Weierstrass preparation theorem, on a neighborhood of $\nul=(0,\dots,0)$ we have 
\begin{equation}\label{weier prepa}
g(x_1,x_2,\dots,x_\ell)=W(x_1)h(x_1,x_2,\dots,x_\ell)
\end{equation}
where $h$ is analytic and $h(\nul)=h(0,\dots,0) \neq 0$. $W(x_1)$ is a Weierstrass polynomial, i.e. $$W(x_1)=x_1^{n-1}+g_{n-1}x_1^{n-1}+\dots +g_0$$ where $g_i(x_2,\dots, x_\ell)$ is analytic and $g_i(0,\dots,0)=0$.
Let $r_i(x_2,\dots,x_\ell)$, $i=1,\ldots,n$ be the (possibly complex) roots of $W(x_1)$.
Choose $\delta>0,\,C>0$ such that $|x_i|<\delta$ for all $i$ implies
$$\min(|h(x)|,1)>\frac{|h(\nul)|}{C},\quad \mu(x)<C,\quad \ln|x_1-r_i(x_2,\ldots,x_\ell)|<0\,.$$ 
Then,
\begin{align*}
&\int_{(-\delta,\delta)^\ell} \ln|g(x)| \,\mu(x)\,dx \\
&\geq\, \int_{(-\delta,\delta)^\ell}\left( \ln\frac{|h(\nul)|}{C}\,+ \sum_{i=1}^n  \ln |x_1-r_i(x_2,\dots, x_\ell)| \right) \mu(x)\,dx \\
&\geq\,  C (2\delta)^\ell \ln\frac{|h(\nul)|}{C}  \,+\,C \sum_{i=1}^n \int_{(\delta,\delta)^\ell} \ln|x_1-r_i(x_2,\dots, x_\ell)|\,dx  \\
&\geq\, C\, \left((2\delta)^\ell\ln\frac{|h(\nul)|}{C}  \,+\, n\,(2\delta)^{\ell-1}\min_{r\in \CC}\int_{(-\delta,\delta)}\ln |x_1-r|\,dx_1 \right) 
\,>\,-\infty
\end{align*}
\end{proof}

\begin{coro}\label{cor-f-zero}
 Let $\XX$ be a compact, connected, analytic manifold (over $\RR$) and $\mu$ a probability measure whose push forward has a continuous density for any chart of $\XX$.
 Suppose that $g\in C^\omega(\XX,\CC)$ is not the zero function. Then $\mu\{x:g(x)=0\}=0$.
\end{coro}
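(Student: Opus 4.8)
The plan is to deduce this directly from Lemma~\ref{lem-f-zero} by contraposition. Suppose, aiming for a contradiction, that $g\in C^\omega(\XX,\CC)$ is not the zero function but that the set $\Zz:=\{x\in\XX:g(x)=0\}$ has $\mu(\Zz)>0$. Since $g$ is continuous, $\Zz$ is closed, hence $\mu$-measurable; and since $\XX$ is compact, $|g|$ is bounded on $\XX$, say $|g(x)|\le K$, so that $\ln|g(x)|\le\ln K=:C<\infty$ for all $x\in\XX$. In particular the positive part $(\ln|g|)_+$ is bounded by $C_+:=\max(C,0)$, so $\int_\XX(\ln|g(x)|)_+\,d\mu(x)\le C_+<\infty$ and the integral $\int_\XX\ln|g(x)|\,d\mu(x)$ is well defined with values in $[-\infty,C]$.

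Next I would observe that on $\Zz$ we have $\ln|g(x)|=-\infty$, whence $(\ln|g|)_-\equiv+\infty$ on $\Zz$ and $\int_\XX(\ln|g(x)|)_-\,d\mu(x)\ge\int_{\Zz}(\ln|g(x)|)_-\,d\mu(x)=+\infty$ because $\mu(\Zz)>0$. Combining with the finiteness of the positive part yields
\[
\int_{\XX}\ln|g(x)|\,d\mu(x)\,=\,\int_{\XX}(\ln|g(x)|)_+\,d\mu(x)\,-\,\int_{\XX}(\ln|g(x)|)_-\,d\mu(x)\,=\,-\infty\,.
\]

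Finally, the hypotheses on $\XX$ and $\mu$ in the corollary are exactly those required by Lemma~\ref{lem-f-zero}, so that lemma applies and forces $g\equiv0$, contradicting the assumption that $g$ is not the zero function. Hence $\mu(\Zz)=0$, as claimed. I do not expect any genuine obstacle here: the only points needing (minimal) care are the measurability of $\Zz$ and the fact that $\ln|g|$ is bounded above, which is what makes the splitting of the integral and the ``$\ln|g|=-\infty$ on a set of positive measure'' argument legitimate.
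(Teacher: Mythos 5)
Your argument is correct and is essentially the same as the paper's: assume $\mu\{g=0\}>0$, observe that then $\int_\XX \ln|g|\,d\mu=-\infty$, and invoke Lemma~\ref{lem-f-zero} to conclude $g\equiv 0$, a contradiction. The extra care you take (measurability of the zero set, boundedness of $\ln|g|$ from above on the compact $\XX$) is correct and merely spells out what the paper's one-line ``clearly'' leaves implicit.
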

\begin{proof}
 Assume $\mu\{x:g(x)=0\}>0$. Then, clearly $\int \ln|g(x)|\,d\mu(x)\,=\,-\infty$ and hence $g=0$ by the lemma above, which contradicts the assumption.
\end{proof}

\end{document}